\newtheorem{theorem}{Theorem}[section]
\newtheorem{prop}[theorem]{Proposition}
\newtheorem{lemma}[theorem]{Lemma}
\newtheorem{problem}[theorem]{Problem}
\newtheorem{cor}[theorem]{Corollary}
\newtheorem*{definition*}{Definition}
\newcommand{\rank}{\mathop{\mathrm{rank}}\nolimits}
\newcommand{\Sp}{\mathop{\mathrm{Sp}}}
\newcommand{\trans}{\mathop{\mathrm{T}}}
\newcommand{\ptrans}{\mathop{\mathrm{PT}}}
\newcommand{\dom}{\mathop{\mathrm{dom}}}
\newcommand{\im}{\mathop{\mathrm{im}}}
\newcommand{\sym}{S_n}
\newcommand{\End}{\mathop{\mathrm{End}}}
\newcommand{\aut}{\mathop{\mathrm{Aut}}}
\begin{document}
\title{A Transversal Property for Permutation Groups Motivated by Partial Transformations}
\author{Jo\~ao Ara\'ujo\footnote{Center for Mathematics and Applications (CMA) and Departamento de Matem\'atica, Faculdade de Ci\^encias e Tecnologia (FCT)
Universidade Nova de Lisboa (UNL), 2829-516 Caparica, Portugal;   jj.araujo@fct.unl.pt} 
 \footnote{CEMAT-Ci\^{e}ncias,
Faculdade de Ci\^{e}ncias, Universidade de Lisboa
1749--016, Lisboa, Portugal; jjaraujo@fc.ul.pt} ,
Jo\~ao Pedro Ara\'ujo\footnote{IST, University of Lisbon},
Wolfram Bentz\footnote{Department of Physics and Mathematics, University of Hull, Kingston upon Hull, HU6 7RX, UK; W.Bentz@hull.ac.uk},\\
Peter J. Cameron\footnote{School of Mathematics and Statistics, University of St Andrews, North Haugh, St Andrews, Fife KY16 9SS, UK; pjc20@st-andrews.ac.uk},
and Pablo Spiga\footnote{Dipartimento di Matematica e Applicazioni, University
of Milano-Bicocca, Milano, 20125 Via Cozzi 55, Italy; pablo.spiga@unimib.it}}
\date{}
\maketitle
\begin{abstract}
In this paper we introduce the definition of $(k,l)$-universal transversal property, which is a refinement of the definition of $k$-universal transversal property, which in turn is a refinement of the classic definition of $k$-homogeneity for permutation groups. In particular, a group possesses the $(2,n)$-universal transversal property if and only if it is primitive; it possesses the $(2,2)$-universal transversal property if and only if it is $2$-homogeneous. Up to a few undecided cases, we give a classification of groups satisfying the $(k,l)$-universal transversal property, for $k\ge 3$. Then we apply this result for studying regular semigroups of partial transformations.
\end{abstract}
\section{Introduction} 

Let $G$ be a permutation group on $\Omega$ of degree~$n$. 
For $k\in \mathbb{N}$, the group $G$ is $k$-{\em homogeneous} if it acts transitively on the
set of $k$-subsets of $\Omega$. Also, we say that $G$ possesses the
{\em $k$-universal transversal property} (or $k$-ut property for short) if the
orbit of any $k$-subset contains a transversal for any $k$-partition of $\Omega$
(a property studied in~\cite{ac}). It is clear that $k$-homogeneity implies
the $k$-ut property.

One of our main goals in this paper is to refine this observation, in the following way.
\begin{definition*}{\rm 
Let  $k$ and $l$ be integers with $1\le k\le l\le n$ and let $G$ be a  permutation
group on $\Omega$  of degree $n$. Then $G$ is said to have the \emph{$(k,l)$-universal transversal property}
(or $(k,l)$-ut for short) if, given any $k$-subset $A$ and any $l$-subset $B$ of
$\Omega$ and a $k$-partition $P$ of $B$, there exists $g\in G$ such that $Ag$
is a transversal for $P$.}
\end{definition*} Now it is clear that the $(k,k)$-ut property is just
$k$-homogeneity and the $(k,n)$-ut property is the $k$-ut property. Moreover, when $k=n$, every group satisfies the $(k,l)$-ut property and hence in what follows we will always assume $k<n$.

We show in Proposition~\ref{up}~\eqref{wards} that, for groups with the $k$-ut property, $(k,l)$-ut becomes
weaker as $l$ increases; so, for any group $G$ satisfying $k$-ut, there is a
threshold $t(G,k)$ such that $G$ satisfies $(k,l)$-ut if and only if
$n\ge l\ge t(G,k)$. 

The aim of this paper is to investigate the $(k,l)$-ut property and its impact on
semigroups of partial transformations. The group-theoretic part of the investigation falls into two quite
separate parts. For $k=1$ and $k=2$, there is no hope of a general
classification: every permutation group has the $1$-ut property, whereas
$2$-ut is equivalent to primitivity. Moreover, the threshold $t(G,k)$ for
$k\in \{1,2\}$ is simply expressed in terms of well-understood parameters
(orbit lengths when $k=1$, and valencies of orbital graphs when $k=2$), and can
be efficiently computed for any permutation group (see Propositions~\ref{k=1}
and~\ref{k=2}).

On the other hand, the results of \cite{abc-et,ac} give a nearly complete
classification of groups with the $k$-ut property for $3\le k\le n-2$, and
so it is entirely reasonable to ask for a similar classification of groups
satisfying $(k,l)$-ut for any such $k$ and all $l$ with $k\le l\le n$; and
this we do (up to a few unresolved cases). The results are summarised below.

\begin{theorem}\label{upperhalf}
Let $G$ be a permutation group of degree $n$ having the $k$-ut property with $n>k > \lfloor (n+1)/2\rfloor$. Then $G$ is $k$-homogeneous, and $t(G,k)=k$. 
\end{theorem}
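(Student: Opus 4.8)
The plan is to establish the substantive assertion that $G$ is $k$-homogeneous; the equality $t(G,k)=k$ is then immediate, since $t(G,k)\ge k$ holds for every group (there is no $(k,l)$-ut with $l<k$), while $k$-homogeneity is exactly the $(k,k)$-ut property and forces $t(G,k)\le k$. It is convenient to argue with complements: $G$ is $k$-homogeneous if and only if it is $(n-k)$-homogeneous, so, writing $m=n-k$, the target becomes $m$-homogeneity. The hypothesis $k>\lfloor(n+1)/2\rfloor$ says exactly that $2k>n$, i.e. $m<k$; a short parity check further shows $m+1\le\lfloor n/2\rfloor$, and this headroom is what will eventually let Livingstone--Wagner finish the job.

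I would first dispose of the boundary value $k=n-1$, where $m=1$ and the goal is merely transitivity. Here $k$-ut, applied to the partitions consisting of one $2$-block $\{a,b\}$ and $n-2$ singletons, says that for every point $p$ and every pair $\{a,b\}$ there is $g\in G$ with $pg\in\{a,b\}$; for a suitably chosen $p$ and pair an intransitive group violates this, so $G$ is transitive, i.e. $1$-homogeneous, i.e. $k$-homogeneous. For the main range $3\le k\le n-2$ (and the hypotheses do force $k\ge 3$), the key step is to deduce $(k-1)$-homogeneity from $k$-ut. I would extract the basic combinatorial consequence first: applying $k$-ut to the star partition $P_S$ whose blocks are the $k-1$ singletons of a $(k-1)$-set $S$ together with the single block $\Omega\setminus S$ -- whose transversals are precisely the sets $S\cup\{x\}$ with $x\notin S$ -- shows that for every $k$-set $A$ and every $(k-1)$-set $S$ there is $g\in G$ with $S\subseteq Ag$; equivalently, every $G$-orbit on $k$-sets covers every $(k-1)$-set. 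The passage from this covering statement to honest transitivity on $(k-1)$-sets is the crux, and for it I would invoke the classification of $k$-ut groups from \cite{ac,abc-et}, which for $3\le k\le n-2$ yields exactly $(k-1)$-homogeneity.

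I expect this last deduction to be the main obstacle. No single partition can force $Ag$ to equal one prescribed set, since any partition having a block of size at least two admits several transversals; hence transitivity on $(k-1)$-sets cannot be produced by a one-partition trick and must rest on the classification of homogeneous groups. Granting $(k-1)$-homogeneity, the proof closes quickly: by complementation this is $(m+1)$-homogeneity, and since $m+1\le\lfloor n/2\rfloor$ the Livingstone--Wagner theorem (in the range of degrees at most $n/2$, $t$-homogeneity implies $(t-1)$-homogeneity) gives $m$-homogeneity, which is $k$-homogeneity again by complementation. The remaining points -- the arithmetic $m+1\le\lfloor n/2\rfloor$ and the description of the transversals of $P_S$ -- are routine, so the entire difficulty is concentrated in the $(k-1)$-homogeneity step.
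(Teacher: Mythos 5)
Your reduction of the theorem to $k$-homogeneity, your treatment of $t(G,k)=k$, and your boundary case $k=n-1$ are all fine, but the central step of your main argument is not just unproved --- it is false. You propose to deduce $(k-1)$-homogeneity from the $k$-ut property for $3\le k\le n-2$, and then descend to $k$-homogeneity via complementation and Livingstone--Wagner. Since $(k-1)$-homogeneity is equivalent to $(n-k+1)$-homogeneity with $n-k+1\le\lfloor n/2\rfloor$, your own Livingstone--Wagner step shows that $(k-1)$-homogeneity is a \emph{strictly stronger} property than $k$-homogeneity in the range $k>\lfloor(n+1)/2\rfloor$; you are therefore trying to prove more than the theorem asserts, and the stronger statement fails. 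Concretely, take $G=\mathrm{PSL}(2,9)$ of degree $n=10$ and $k=8$: this group is $2$-transitive, hence $8$-homogeneous, hence has the $8$-ut property, but it is not $3$-homogeneous (it has two orbits on $3$-subsets, forming the regular two-graphs used in Proposition~\ref{p:3ut+}), hence not $7$-homogeneous. So $k$-ut does not yield $(k-1)$-homogeneity here.

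The citation you lean on does not cover this range: the result of \cite{ac} that produces $(k-1)$-homogeneity from $k$-ut (Proposition~4.3(2) of \cite{ac}, as used in the proof of Proposition~\ref{Living}) carries $k>\lfloor(n+1)/2\rfloor$ as an explicit escape clause, i.e.\ it asserts nothing about $(k-1)$-homogeneity precisely in the range of Theorem~\ref{upperhalf}. The result that does apply in this range is \cite[Theorem~3.3]{ac}, which states directly that for $n>k>\lfloor(n+1)/2\rfloor$ the $k$-ut property is \emph{equivalent} to $k$-homogeneity; the paper's proof is exactly this citation (via the remark preceding Proposition~\ref{Living}), with no detour through $(k-1)$-homogeneity. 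Your covering observation (every $G$-orbit on $k$-sets contains a superset of every $(k-1)$-set) is correct and is indeed the combinatorial input behind \cite[Theorem~3.3]{ac}, but the way to exploit it is a counting argument forcing a single orbit on $k$-sets when $k>n/2$, not transitivity on $(k-1)$-sets.
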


\begin{theorem}\label{k>5}
Let $G$ be a permutation group of degree $n$ having the $k$-ut property with $6\le k\le \max\{\lfloor (n+1)/2\rfloor,n-6\}$. Then $G$ is either the alternating or the symmetric group of degree $n$. In particular, $t(G,k)=k$.
\end{theorem}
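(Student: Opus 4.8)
The plan is to reduce the combinatorial $k$-ut hypothesis to a strong multiple-homogeneity condition and then invoke the classification of multiply transitive groups. First I would use the reduction, established in the analysis of the $k$-ut property in \cite{ac}, that a group with the $k$-ut property is automatically $(k-1)$-homogeneous; this is the bridge from our combinatorial hypothesis to classical permutation-group invariants. With $k\ge 6$ in force, this already produces a group that is $(k-1)$-homogeneous with $k-1\ge 5$.

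Next I would upgrade homogeneity to transitivity via the Livingstone--Wagner theorem, which asserts that a $t$-homogeneous group with $t\ge 5$ and $t\le n/2$ is $t$-transitive. There are two regimes. When $k-1\le n/2$, Livingstone--Wagner applies directly and $G$ is $(k-1)$-transitive. When $k-1>n/2$, I would first pass to complements: a group is $t$-homogeneous if and only if it is $(n-t)$-homogeneous, so $G$ is $(n-k+1)$-homogeneous with $n-k+1\le n/2$; here the upper bound $k\le n-6$ guarantees $n-k+1\ge 7\ge 5$, so Livingstone--Wagner again applies and $G$ is $(n-k+1)$-transitive. In both regimes $G$ is $m$-transitive for some $m\ge 5$, with $m\ge 7$ in the complementary regime.

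Now I would feed this into the classification of finite $m$-transitive groups (a consequence of CFSG): for $m\ge 6$ the only $m$-transitive groups are $\alt$ and $\sym$, which settles every case except $m=5$. The value $m=5$ arises only when $k-1=5$, that is $k=6$, and then the list of $5$-transitive groups adds precisely the Mathieu groups $\M_{12}$ (degree $12$) and $\M_{24}$ (degree $24$). Note that both degrees satisfy $6\le 6\le \max\{\lfloor(n+1)/2\rfloor,\,n-6\}$, so neither can be discarded on size grounds; in particular $k=6$ always lands in the first regime, so these two groups are the entire residue of the argument.

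The main obstacle, therefore, is to show that $\M_{12}$ and $\M_{24}$ fail the $6$-ut property. For this I would exhibit, in each group, a $6$-subset $A$ and a $6$-partition $P$ such that no image $Ag$ is a transversal of $P$, exploiting the fact that each group has more than one orbit on $6$-subsets (governed by the Steiner systems $S(5,6,12)$ and $S(5,8,24)$): a partition all of whose transversals lie in a single orbit witnesses the failure. This I expect to verify either by a direct finite computation or combinatorially from the block structure of the relevant Steiner system. Once $\M_{12}$ and $\M_{24}$ are excluded, $G$ must be $\alt$ or $\sym$; since both are $k$-transitive, hence $k$-homogeneous, for every $k$ in the stated range, they satisfy $(k,k)$-ut, and therefore the threshold satisfies $t(G,k)=k$.
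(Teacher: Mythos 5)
Your argument is essentially a self-contained reconstruction of the results the paper simply cites: the published proof of Theorem~\ref{k>5} consists of two references to \cite{ac} (Theorem~1.4 for $k\le\lfloor(n+1)/2\rfloor$ and Theorem~3.3 for $\lfloor(n+1)/2\rfloor<k\le n-6$), and the chain you describe --- $k$-ut implies homogeneity, Livingstone--Wagner to upgrade homogeneity to transitivity, the CFSG classification of multiply transitive groups, and the exclusion of $\M_{12}$ and $\M_{24}$ at $k=6$ --- is precisely the content of those cited theorems. So the mathematics agrees; what your version buys is transparency about where CFSG and the Mathieu groups enter, at the cost of having to actually carry out the one nontrivial verification, namely that $\M_{12}$ and $\M_{24}$ fail the $6$-ut property. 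You leave that to ``a direct finite computation or combinatorially from the block structure,'' which is acceptable (and is how such cases are settled in \cite{ac}), but be aware that it is the load-bearing step: everything else in your argument is bookkeeping.

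Two small repairs are needed. First, your opening reduction ``$k$-ut implies $(k-1)$-homogeneous'' is only available from \cite{ac} in the range $k\le\lfloor(n+1)/2\rfloor$: the relevant statement (Proposition~4.3(2) of \cite{ac}, as quoted in Proposition~\ref{Living} above) explicitly lists $k>\lfloor(n+1)/2\rfloor$ as an escape clause, so in the upper regime your second step rests on a premise the citation does not deliver, and proving $(k-1)$-homogeneity there from scratch would essentially require the theorem itself. The fix is easy: for $k>\lfloor(n+1)/2\rfloor$ use instead the equivalence of $k$-ut with $k$-homogeneity (\cite[Theorem~3.3]{ac}, quoted in the remark before Proposition~\ref{Living}), complement to $(n-k)$-homogeneity with $n-k\ge 6$, and apply Livingstone--Wagner; your complementation step goes through verbatim with $k$ in place of $k-1$. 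Second, note that the boundary case $n$ even, $k=n/2+1$ lands in your ``first regime'' (since $k-1\le n/2$) yet still satisfies $k>\lfloor(n+1)/2\rfloor$, so it too needs this substitute argument rather than the $(k-1)$-homogeneity reduction. With these adjustments, and granting the finite check for the two Mathieu groups, your proof is correct.
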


\begin{theorem}\label{k=5}
Let $G$ be a permutation group of degree $n$ having the $5$-ut property. 
 Then either $t(G,5)=5$ and hence $G$ is $5$-homogeneous, or $n=33$, $G=\mathrm{P}\Gamma\mathrm{L}(2,32)$ and $t(G,5)=30$.
\end{theorem}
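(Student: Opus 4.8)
The plan is to build on the existing classification of groups with the $5$-ut property from \cite{ac,abc-et} and to reduce the theorem to computing the threshold $t(G,5)$ in each surviving case. I would first clear away the small degrees: for $6 \le n \le 8$ one has $5 > \lfloor(n+1)/2\rfloor$, so Theorem~\ref{upperhalf} already forces $G$ to be $5$-homogeneous with $t(G,5)=5$. For $n \ge 9$ we have $5 \le \lfloor(n+1)/2\rfloor$, but $k=5$ lies below the range of Theorem~\ref{k>5}, so this range requires a separate treatment and is where the real content lies.

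For $n \ge 9$ the dichotomy is between $5$-homogeneous groups and the rest. If $G$ is $5$-homogeneous then $t(G,5)=5$ is immediate, since by definition $(5,5)$-ut is exactly $5$-homogeneity; so nothing further is needed here (the $5$-homogeneous groups themselves being the symmetric and alternating groups together with $\M_{12}$ and $\M_{24}$, by the Livingstone--Wagner theorem and the classification of multiply transitive groups). The substance is therefore the non-$5$-homogeneous case. Here I would invoke the classification of \cite{ac,abc-et}, which singles out the $5$-ut groups that fail to be $5$-homogeneous; the outcome is that the only possibility is a group acting on the $33$ points of the projective line over $\gf(32)$ and lying between $\psl(2,32)$ and $\pgaml(2,32)$. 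Because $\mathrm{Gal}(\gf(32)/\gf(2)) \cong C_5$ is of prime order, the only intermediate groups are $\psl(2,32)$ and $\pgaml(2,32)$ themselves, and I would check directly that $\psl(2,32)$ does not satisfy $5$-ut --- the field automorphisms are needed to fuse the relevant orbits on $5$-subsets --- so that $G = \pgaml(2,32)$ is the unique exception.

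It then remains to show that $t(G,5)=30$ for $G=\pgaml(2,32)$. By Proposition~\ref{up}~\eqref{wards}, the set of admissible $l$ is an up-interval ending at $n=33$, so it suffices to prove that $(5,30)$-ut holds while $(5,29)$-ut fails. The natural tool is the orbit decomposition of $G$ on the $5$-subsets of the projective line, which is controlled by cross-ratio data taken modulo both the $S_5$-action permuting the five chosen points and the order-$5$ Galois action; with these orbits in hand one records, for each orbit and each partition shape, whether a transversal of that shape can be realised. For the failure at $l=29$ I would exhibit an explicit $5$-set $A$, a $29$-set $B$, and a $5$-partition $P$ of $B$ such that every transversal of $P$ lies in an orbit distinct from $A^G$; unbalanced partitions with several singleton parts are the natural candidates, as they possess the fewest transversals. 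For the positive statement I would verify that for every orbit representative $A$ and every $5$-partition of every $30$-set some image $Ag$ is a transversal.

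The main obstacle is exactly this final threshold computation. Showing that the transition happens at precisely $l=30$, and not at some neighbouring value, depends on a fine understanding of how the orbits of $5$-subsets are distributed among the transversals of each partition type; I expect to need an explicit obstructing configuration for the failure at $29$ together with a systematic, computer-assisted check for the positive direction at $30$, and making these two bounds meet exactly is the delicate heart of the argument.
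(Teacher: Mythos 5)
Your proposal is correct and follows essentially the same route as the paper: the paper's proof simply invokes the classification of $5$-ut groups (citing~\cite{ac}) to reduce to the dichotomy between $5$-homogeneous groups and $\mathrm{P}\Gamma\mathrm{L}(2,32)$ on $33$ points, and then states that $t(G,5)=30$ in the exceptional case ``follows with a computation.'' Your additional detail on clearing the small degrees, ruling out $\mathrm{PSL}(2,32)$, and organising the threshold computation via orbits on $5$-subsets is a reasonable elaboration of that same computational step rather than a different method.
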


\begin{theorem}\label{k=4}
Let $G$ be a permutation group of degree $n$ having the $4$-ut property.  
 Then either $t(G,4)=4$ and hence $G$ is $4$-homogeneous, or one of the following holds:
\begin{enumerate}
\item\label{case41} $n=12$, $G=M_{11}$ and $t(G,4)=10$, 
\item\label{case42} $n=8$, $G=\mathrm{PGL}(2,7)$ and $t(G,4)=7$, 
\item\label{case42b} $n=7$, $G=\mathrm{AGL}(1,7)$ and $t(G,4)=7$,
\item\label{case43} $n=q+1$, $\mathrm{PSL}(2,q) \le G \le \mathrm{P}\Gamma\mathrm{L}(2,q)$, $q\equiv 11\pmod {12}$ or $q=2^p$ with $p$ prime, and for all $c\in\mathrm{GF}(q)\setminus\{0,1\}$ the elements $-1$, $c$ and $c-1$ generate the multiplicative group of the field $\mathrm{GF}(q)$.
\end{enumerate}
\end{theorem}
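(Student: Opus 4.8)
The plan is to bootstrap from the existing classification of groups with the 4-ut property and then to pin down the threshold $t(G,4)$ in each surviving case. First I would invoke the classification of \cite{abc-et,ac}: a permutation group of degree $n$ with the 4-ut property is either 4-homogeneous, or is permutation isomorphic to one of $M_{11}$ of degree $12$, $\pgl(2,7)$ of degree $8$, $\agl(1,7)$ of degree $7$, or a group $G$ with $\psl(2,q)\le G\le\pgaml(2,q)$ of degree $q+1$ subject to the congruence conditions on $q$ and the field-generation condition displayed in part~(d). Thus the bulk of the classification is inherited: the $q$-conditions and the requirement that $-1$, $c$, $c-1$ generate $\GF(q)^*$ are exactly those produced by that classification, and are not re-derived here. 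What remains is purely a matter of computing thresholds.

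For the 4-homogeneous alternative there is nothing to do beyond unwinding definitions: $G$ has the $(4,4)$-ut property, which is precisely 4-homogeneity, and since $(4,l)$-ut weakens as $l$ grows by Proposition~\ref{up}~\eqref{wards}, the threshold is $t(G,4)=4$. This gives the first branch of the dichotomy. Each of the four remaining groups fails to be 4-homogeneous, so $(4,4)$-ut fails and hence $t(G,4)\ge 5$; the work is to locate the exact value.

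For each of the three exceptional groups $M_{11}$, $\pgl(2,7)$ and $\agl(1,7)$ I would compute $t(G,4)$ directly from its characterisation as one more than the largest $l$ for which $(4,l)$-ut fails. To certify that $(4,l)$-ut fails it suffices to exhibit a single $4$-subset $A$, an $l$-subset $B$, and a $4$-partition $P$ of $B$ for which no member of the $G$-orbit of $A$ is a transversal of $P$; to certify that $(4,l')$-ut holds for the complementary range I would run through the finitely many pairs consisting of an orbit representative of a $4$-subset and a $4$-partition of an $l'$-subset, checking the transversal condition in each. Since $G$ is finite this is a finite computation, readily performed with a computer algebra system such as GAP, and yields $t(G,4)=10$, $7$ and $7$ respectively; in the latter two cases the threshold attains its maximum $n$, so only the plain 4-ut property survives, a fact I would confirm by producing one obstructing $4$-partition of an $(n-1)$-subset.

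The genuine difficulty lies with the infinite family $\psl(2,q)\le G\le\pgaml(2,q)$, where no finite enumeration is available. A uniform treatment would require translating the transversal condition into the arithmetic of $\GF(q)$, using the (sharp) $3$-transitivity of $\pgl(2,q)$ on the projective line and the cross-ratio to parametrise the orbits of $4$-subsets, and then solving the resulting generation-and-covering problem simultaneously for all admissible $q$. I expect this to be the main obstacle, and it is consistent with part~(d) recording only membership and the field condition rather than a closed-form threshold; accordingly I would establish that every such $G$ has the 4-ut property and is not 4-homogeneous, but leave the precise value of $t(G,4)$ for this family among the small number of cases the paper flags as unresolved.
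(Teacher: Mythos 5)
Your proposal matches the paper's proof: the authors likewise cite the classification of \cite{abc-et} to reduce to the four exceptional cases and determine $t(G,4)$ for the first three by machine computation, leaving case~(d) unresolved. One small caveat: your stated intention to ``establish that every such $G$ has the 4-ut property'' in case~(d) is not needed for the theorem (which only asserts a necessary condition on groups with 4-ut) and is in fact left open by the paper, which is unsure whether case~(d) arises at all for $n\ge 50$.
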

We are not sure if case~\eqref{case43} actually arises for $n\ge 50$ and what  the value for $t(G,4)$ is in case it does arise.

\begin{theorem}\label{k=3}
Let $G$ be a permutation group of degree $n$ having the $3$-ut property. 
Then either $t(G,3)=3$ and hence $G$ is $3$-homogeneous, or $(G,n,t(G,3))$ is in Table~$\ref{table:1}$.
\end{theorem}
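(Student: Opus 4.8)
The plan is to build on the (essentially complete) classification of finite permutation groups with the $3$-ut property from \cite{abc-et,ac}: this restricts $G$ to a finite list of sporadic examples together with a handful of infinite families, chiefly the $2$-transitive groups with $\mathrm{PSL}(2,q)\le G\le\mathrm{P}\Gamma\mathrm{L}(2,q)$ on the projective line, plus a few affine groups with $\mathrm{AGL}(1,q)\le G\le\mathrm{A}\Gamma\mathrm{L}(1,q)$. Every group on this list that happens to be $3$-homogeneous has the $(3,3)$-ut property and so yields $t(G,3)=3$, the first alternative of the statement; the work is therefore confined to the groups that have $3$-ut but fail $3$-homogeneity, and for each of these we must determine $t(G,3)$ exactly and confront it with Table~\ref{table:1}.

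The computation rests on the following reformulation. If $G$ is not $3$-homogeneous it has at least two orbits on $3$-subsets; fix one such orbit $O$ and, for an integer $l$, say that an $l$-subset $B$ with a $3$-partition $P$ is \emph{$O$-obstructing} if no transversal of $P$ lies in $O$. Restricted to representatives of $O$, the $(3,l)$-ut property fails exactly when an $O$-obstructing pair $(B,P)$ with $|B|=l$ exists; writing $m(O)$ for the largest such $l$, the orbit $O$ contributes the threshold $m(O)+1$, and $t(G,3)=\max_O\bigl(m(O)+1\bigr)$. By the monotonicity of Proposition~\ref{up}~\eqref{wards} (applied orbitwise, which the same argument permits) it now suffices, for each group, to \textbf{(failure)} exhibit one $O$-obstructing partition of an $m(O)$-set, and \textbf{(existence)} prove that every $3$-partition of every $(m(O)+1)$-set admits a transversal in $O$.

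For the sporadic groups and the small-degree members both steps can be settled by a direct, computer-assisted search over partitions. The heart of the matter is the infinite families. For $\mathrm{PSL}(2,q)\le G\le\mathrm{P}\Gamma\mathrm{L}(2,q)$ the orbit of a triple $\{a,b,c\}$ is governed by a discriminant-type invariant, essentially whether $(a-b)(b-c)(c-a)$ is a square in $\mathrm{GF}(q)^{\ast}$ (well defined on unordered triples precisely in the non-$3$-homogeneous range), and the field automorphisms in $G/\mathrm{PSL}(2,q)$ act on it by Frobenius twists. Avoiding $O$ forces, for every $a\in P_1$ and $b\in P_2$, the whole of $P_3$ to lie in the non-square set $N(a,b)=\{c:(a-b)(b-c)(c-a)\text{ is a non-square}\}$, and symmetrically for the other parts. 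The extremal obstruction is the near-degenerate one with $P_1=\{a\}$, $P_2=\{b\}$ and $P_3\subseteq N(a,b)$: from the classical evaluation $\sum_{c}\chi\bigl((c-a)(c-b)\bigr)=-1$ one gets $|N(a,b)|=(q-1)/2+O(1)$, which pins down $m(O)$, and for $l$ one larger a pigeonhole count on squares versus non-squares produces the required transversal.

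The main obstacle is to prove that no less degenerate shape does better, i.e.\ that once $l$ exceeds this degenerate bound every $3$-partition of an $l$-set has a transversal in $O$. If two of the parts, say $P_1$ and $P_2$, each have at least two elements, then $P_3$ is contained in an intersection $N(a_1,b)\cap N(a_2,b)$ of non-square sets for distinct $a_1,a_2$; the size of such an intersection is a character sum over the whole of $\mathrm{GF}(q)$ and is bounded by $q/4+O(\sqrt q)$ via the Weil bound, and running the symmetric argument on the other parts shows that an obstructing partition cannot be larger than the two-singleton one, so the degenerate threshold is the true value of $m(O)$. Making the Weil estimates sharp enough that only finitely many small fields survive, clearing those by computation, and simultaneously tracking the Frobenius twists in the $\mathrm{P}\Gamma\mathrm{L}$ and $\mathrm{A}\Gamma\mathrm{L}$ cases together with the congruence conditions on $q$ inherited from the $3$-ut classification, is the delicate part of the argument; as with case~\eqref{case43} of Theorem~\ref{k=4}, I would expect a few of the largest families to remain with $t(G,3)$ undetermined.
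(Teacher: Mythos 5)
Your overall framework (start from the classification of $3$-ut groups, exhibit an obstructing partition for the lower bound on $t(G,3)$, prove existence of transversals for the upper bound, use computation for small sporadic cases) matches the paper, and your extremal configuration of two singletons together with a ``non-square neighbourhood'' is exactly the witness the paper uses to get its lower bound. But there are two genuine gaps. First, your analytic machinery (the discriminant invariant $(a-b)(b-c)(c-a)$ and Weil-type character sums) only speaks to the $\mathrm{PSL}(2,q)$ and affine $\mathrm{A\Gamma L}(1,q)$ families, whereas Table~\ref{table:1} also contains the infinite families $\mathrm{Sp}(2d,2)$ in its two $2$-transitive representations (degrees $2^{2d-1}\pm 2^{d-1}$) and $2^{2d}:\mathrm{Sp}(2d,2)$, plus $\mathrm{Co}_3$; these cannot be disposed of by a computer search over partitions since $d$ is unbounded. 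The paper's key observation is that \emph{all} of these groups (including the $\mathrm{PSL}(2,q)$ case) have exactly two orbits on $3$-sets, each a regular two-graph with parameters $\lambda\le\lambda'$, and a single combinatorial argument (Proposition~\ref{p:3ut+}, using Watkins' connectivity theorem for the singleton case and a counting argument on an auxiliary graph for the general case) yields $\lambda'+3\le t(G,3)$ and $(3,l)$-ut for $l>\min\{3\lambda/2,(6\lambda'+9)/5\}$ uniformly. Your approach misses the uniform treatment that makes the symplectic families tractable.

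Second, your claim that the two-singleton configuration is extremal --- and hence that $t(G,3)$ is determined \emph{exactly} for $\mathrm{PSL}(2,q)\le G\le\mathrm{P\Gamma L}(2,q)$ --- is not supported by the argument you sketch, and is in fact more than the paper proves. If all three parts have at least two elements, your pairwise Weil bound gives $|P_i|\le q/4+O(\sqrt q)$ for each part, hence a total of $3q/4+O(\sqrt q)$, which \emph{exceeds} the $(q-1)/2+O(1)$ of the degenerate configuration; so this case is not eliminated. One would need to intersect more than two non-square conditions (and control their dependence) to beat $q/2$, and nobody has done this: the paper's Table~\ref{table:1} records only the bounds $\frac{q+5}{2}\le t(G,3)\le\frac{3q+11}{5}$ for this family, leaving the exact threshold open (it is explicitly posed as a problem at the end of the paper). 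You should either supply the stronger intersection estimate or weaken your conclusion to a pair of bounds as the paper does.
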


\begin{table}[htbp]
\begin{center}
\begin{adjustbox}{angle=90}
\begin{tabular}{|c|c|c|c|}\hline
$n$&$G$&$t(G,3)$& Comments\\\hline
$q+1$& $\mathrm{PSL}(2,q) \le G \le \mathrm{P\Sigma L}(2,q)$&$\frac{q+5}{2}\le t(G,3)\le \frac{3q+11}{5}$&  $q = 1 \pmod 4$\\
$2^{2d-1} + 2^{d-1}$& $\Sp(2d,2)$&$2^{2d-2}+2^{d-1}+1\le t(G,3)\le \frac{3\cdot 2^{2d-1}+3 \cdot 2^{d}+2}{5}$ &$d \ge 3$\\
$28$ & $\Sp(6,2)$ & $19$ &\\
$2^{2d-1} - 2^{d-1}$& $\Sp(2d,2)$&$2^{2d-2}+3\le t(G,3)\le \frac{3\cdot 2^{2d-1}+14}{5}$ &$d \ge 4$\\
$2^{2d}$& $2^{2d} \colon \Sp(2d,2)$&$2^{2d-1}+3\le t(G,3)\le \frac{3\cdot 2^{2d}+14}{5}$ &$d\ge 2$\\
$5$& $\mathrm{C}_5$ & $5$ & \\
$5$& $\mathrm{D}(2*5)$ & $5$ & \\
$7$& $\mathrm{AGL}(1,7)$&$6$ & \\
$11$& $\mathrm{PSL}(2,11)$&$9$ & \\
$16$& $2^6:A_6$&$11$ &\\
$64$& $2^6:\mathrm{G}_2(2)'$&$59\le t(G,3)\le 64$ &\\
$64$& $2^6:\mathrm{G}_2(2)$&$59\le t(G,3)\le 64$ &\\
$65$& $\mathrm{Sz}(8)$&$45\le t(G,3)\le 65$ &\\
$65$& $\mathrm{Aut}(\mathrm{Sz}(8))$&$45\le t(G,3)\le 65$ &\\
$176$& $HS$&$165\le t(G,3)\le 176$ &\\
$276$& $Co_3$&$165\le t(G,3)\le 169$ &\\
$q^2+1$&$\mathrm{Sz}(q)\unlhd G\le \mathrm{Aut}(\mathrm{Sz}(q))$&-&no information on $t(G,3)$, $q=2^{2d+1}, d\ge 2$\\
$q$&$G\le \mathrm{A}\Gamma\mathrm{L}(1,q)$&-&no information on $t(G,3)$\\
&&&$\mathrm{AGL}(1,q)\le G$ or $|\mathrm{AGL}(1,q):G|=2$\\ 
&&& $q$ is prime with $q \equiv 11 \mbox{ mod } 12$, or\\
&&& $q=2^p$ with $p$ prime, and\\
&&&   $\forall c \in \mathrm{GF}(q)\setminus\{0,1\}$, 
$|\langle -1,c,c-1\rangle|=q-1$\\\hline
\end{tabular}
\end{adjustbox}
\end{center}
\label{table:1}
\end{table}


This machinery is used to obtain a result about transformation semigroups. Before stating the main result we introduce some notation. Let $\Omega$ be a set; a {\em partial transformation} $t$ on $\Omega$ is a function $t:\Delta\to \Omega$, where $\Delta$ is an arbitrary subset of $\Omega$. The set of all such partial transformations forms a semigroup $\ptrans(\Omega)$ under partial composition. Let $\ptrans_{k,l}(\Omega)$ denote the set of all partial transformations $t:\Delta\to \Omega$ on $\Omega$  such that $|\Omega t|=k$ and $|\Delta|=l$. Recall that a semigroup $S$ is {\em regular} if, for all $a\in S$, there exists $b\in S$ such that $a=aba$.
\begin{theorem}\label{thrm:main}
Let $G$ be a permutation group on $\Omega$ of degree $n$ and consider the following three properties:
\begin{enumerate} 
\item\label{c)} $G$ possesses the $(k,l)$-ut property. 
\item\label{a)} For all $t\in \ptrans_{k,l}(\Omega)$, the semigroup $\langle G,t\rangle$ is regular.
\item\label{b)} For all $m\ge l$ and all $t\in \ptrans_{k,m}(\Omega)$, the semigroup $\langle G,t\rangle$ is regular.
\end{enumerate} 
Then, ${(c)}$ implies $(b)$ and $(b)$ implies $(a)$; moreover,  when $k\le \max(\lfloor(n+1)/2\rfloor,n-6)$, $(a)$ implies $(c)$ and hence $(a)$, $(b)$ and $(c)$ are equivalent.
\end{theorem}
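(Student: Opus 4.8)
The plan is to reduce the whole statement to a single ``local'' regularity criterion for one generator, and then to read off the three implications by elementary bookkeeping together with the monotonicity of the $(k,l)$-ut property (Proposition~\ref{up}).

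The criterion I would isolate is the following: for a fixed $t\in\ptrans_{k,m}(\Omega)$ with image $I=\Omega t$ and kernel $P=\ker t$ (a $k$-partition of the $m$-set $\Delta=\dom t$), the semigroup $\langle G,t\rangle$ is regular if and only if there exists $g\in G$ such that $Ig$ is a transversal for $P$. To prove the ``if'' direction, note first that any $g$ with $Ig$ a transversal of $P$ satisfies $Ig\subseteq\Delta$, so the partial map $gt$ is defined on all of $I$; since $Ig$ meets each block of $P$ exactly once, $t$ restricts to a bijection $Ig\to I$, and hence $(gt)|_I$ is a permutation $\theta$ of $I$. As $I\subseteq\dom(gt)$, every power $(gt)^N$ is defined on $I$ and acts there as $\theta^N$, so for $N=\mathrm{ord}(\theta)$ the element $e=(gt)^N$ fixes $I$ pointwise and has image exactly $I$; thus $e$ is an idempotent of $\langle G,t\rangle$ with the same image as $t$, and $t$ is regular. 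For the ``only if'' direction I would use the structural fact that every rank-$k$ element of $\langle G,t\rangle$ is a word $g_0tg_1\cdots tg_s$ in which each application of $t$ after the first falls on a transversal of $P$, and that such a word necessarily has image a $G$-translate of $I$ and kernel a $G$-translate of $P$. If $t$ is regular, then its $\mathcal R$-class contains an idempotent $e\in\langle G,t\rangle$ with kernel $P$ and image some $Ig$; as the image of an idempotent is a transversal of its kernel, $Ig$ is a transversal for $P$, as required. A short extra argument, propagating $e$ and its $G$-translates through the lower $\mathcal D$-classes and using the downward closure of the transversal property in $k$, upgrades ``$t$ regular'' to ``$\langle G,t\rangle$ regular'', so the equivalence indeed controls elements of all ranks.

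Granting the criterion, the implications follow. The implication $(b)\Rightarrow(a)$ is immediate, since $(a)$ is the instance $m=l$ of $(b)$. For $(c)\Rightarrow(b)$, I would first invoke the monotonicity of Proposition~\ref{up} to pass from $(k,l)$-ut to $(k,m)$-ut for every $m\ge l$; then, given any $t\in\ptrans_{k,m}(\Omega)$, apply $(k,m)$-ut to the $k$-set $A=I$, the $m$-set $B=\Delta$ and the $k$-partition $P$ to obtain $g\in G$ with $Ig$ a transversal of $P$, and conclude by the criterion that $\langle G,t\rangle$ is regular. For $(a)\Rightarrow(c)$, given a $k$-set $A$, an $l$-set $B$ and a $k$-partition $P$ of $B$, I would build a partial transformation $t\in\ptrans_{k,l}(\Omega)$ with $\dom t=B$, $\ker t=P$ and $\Omega t=A$ (possible since $P$ has exactly $k=|A|$ blocks); regularity of $\langle G,t\rangle$ then yields, via the ``only if'' part of the criterion, an element $g\in G$ with $Ag$ a transversal of $P$, which is exactly what $(k,l)$-ut demands.

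The main obstacle is the ``only if'' half of the criterion in the genuinely partial setting: one must keep tight control of the images, kernels and, crucially, the domains of arbitrary words in $\langle G,t\rangle$, so as to guarantee that the idempotent produced by regularity has image an honest $G$-translate of $I$ lying inside $\Delta$, and one must then descend correctly through the lower $\mathcal D$-classes. This is also where the hypothesis $k\le\max(\lfloor(n+1)/2\rfloor,n-6)$ enters in the direction $(a)\Rightarrow(c)$: the sharp structural input needed to make the correspondence between regularity and $(k,l)$-ut exact — in particular the downward control on the property and the exclusion of exceptional configurations — relies on the classification of $k$-ut groups recorded in Theorems~\ref{upperhalf}--\ref{k=3}, which is available precisely in that range. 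Outside it the group-theoretic picture is not fully resolved, so only the ``soft'' implications $(c)\Rightarrow(b)\Rightarrow(a)$ survive in full generality.
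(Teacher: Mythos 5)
Your overall skeleton matches the paper's: the single-element criterion you prove (for a fixed $u$, the element $u$ is regular in $\langle G,u\rangle$ if and only if some $g\in G$ carries $\im(u)$ to a transversal of $\ker(u)$ inside $\dom(u)$) is exactly the paper's Lemma~\ref{mainlemma}, and your arguments for the two unconditional implications coincide with the paper's. The genuine gap is the step you describe as ``a short extra argument, propagating $e$ and its $G$-translates through the lower $\mathcal{D}$-classes and using the downward closure of the transversal property in $k$''. That is not a short extra argument: it is the entire content of the hard implication, and it is exactly where the hypothesis $k\le\max(\lfloor(n+1)/2\rfloor,n-6)$ is consumed. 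Regularity of $\langle G,t\rangle$ requires \emph{every} element $v$ to be regular; such a $v$ may have rank $r<k$ and domain $B$ of various sizes, and by the criterion one then needs $G$ to have the $(r,|B|)$-ut property. ``Downward closure'' in $k$ is not formal monotonicity (Proposition~\ref{up} only gives monotonicity in $l$); it is Proposition~\ref{Living}, a Livingstone--Wagner-type statement resting on the classification of $k$-ut groups from \cite{ac}, and it has genuine exceptions ($n=5$, $k=3$, $G$ cyclic or dihedral; $n=7$, $k=4$, $G\cong\mathrm{AGL}(1,7)$) that must be treated separately (there $t(G,k)=n$ forces $l=m=n$, hence $\dom(v)=\Omega$, and one checks the $(r,n)$-ut property directly). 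Your proposal neither proves this descent nor addresses the exceptions; indeed the ``if'' half of your criterion, as stated for a single $t$ with no ut hypothesis on $G$ at all, is false: one $g$ with $Ig$ a transversal of $P$ makes $t$ itself regular but says nothing about the lower-rank elements of $\langle G,t\rangle$.

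A secondary but real error: you locate the hypothesis on $k$ in the wrong implication. You attach it to the ``only if'' half of the criterion and to the direction ``regularity implies $(k,l)$-ut''; but that direction is unconditional (your own argument for it uses nothing beyond Lemma~\ref{mainlemma}), and the ``only if'' half of the criterion is the easy half. The hypothesis is needed only for ``$(k,l)$-ut implies regularity of $\langle G,t\rangle$ for all $t\in\ptrans_{k,m}(\Omega)$ with $m\ge l$'', for the reason above. (There is also a lettering slip --- the displayed item letters and the internal label names of the statement are permuted --- but your intended content for the two easy implications is correct once that is untangled.)
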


In the past, the {\em praxis} was to consider a semigroup theory problem solved whenever it was reduced to a problem in group theory.  This dramatically changed in recent years as it turned out to be much more instructive  for both sides to keep an ongoing conversation. One of the driving forces of this conversation has been the following general problem:
\begin{quote}
Classify the pairs $(G,a)$, where $a$ is a map on a finite set $\Omega$ and $G$ is a group of permutations of $\Omega$,
such that the semigroup  $\langle G,a\rangle$ generated by $a$ and $G$  has a given property $P$.
\end{quote}

A very important class of groups that falls under this general scheme is that of \emph{synchronizing groups}, groups of permutations on a set that together with any non-invertible map on the same set generate a constant (see \cite{steinberg,abc,abcrs,arcameron22,acs,cameron,neumann}). These groups are very interesting from a group theoretic  point of view and are linked  to the \emph{\v{C}ern\'y conjecture},  a longstanding open problem in automata theory.

Another instance of the general question is the following.
Let $A\subseteq \trans (\Omega)$, the full trasnformation monoid on $\Omega$; classify the permutation groups $G\le \sym$ such that $\langle G,a\rangle$ is regular for all $a\in A$. For many different sets $A$, this problem has been considered in    \cite{abc2,abc-et,ac,AMS,lmm,lm,levi96,mcalister}, among others. The goal of this paper is to consider the similar problem when $A$ is a set of partial transformations with prescribed domain and image sizes (Theorem \ref{thrm:main}).

Section \ref{scet2} contains some general results. In Sections~\ref{case1} and~\ref{case2} we deal
with the cases $k=1$ and $k=2$. Sections \ref{case3} and \ref{case3b} deal with the cases $k\ge 3$. In Section \ref{semigroups} we use the previous results to extract our main application on semigroups. 

\section{General results}\label{scet2}

We begin by collecting some observations about the $(k,l)$-ut  property.

\begin{prop} \label{up} Let $G$ be a permutation group of degree $n$. 
\begin{enumerate}
\item\label{warda} The $(k,k)$-ut  property is equivalent to $k$-homogeneity and the $(k,n)$-ut  property is equivalent to the $k$-ut property.
\item\label{wardb} The $(2,n)$-ut property is equivalent to primitivity.
\item\label{wards} For fixed $k$, if $G$ possesses $(k,l)$-ut, then it possesses
$(k,m)$-ut, for all $l\le m\le n$. 
\end{enumerate}
\end{prop}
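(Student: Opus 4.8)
The plan is to treat the three parts in increasing order of depth, with the monotonicity claim~\eqref{wards} as the real content. Part~\eqref{warda} I would obtain by simply unwinding the definition. When $l=k$ the set $B$ is itself a $k$-set, so its only $k$-partition consists of $k$ singletons and a transversal for it is forced to be all of $B$; hence $(k,k)$-ut says precisely that for any two $k$-subsets $A,B$ some $g\in G$ satisfies $Ag=B$, i.e. $G$ is $k$-homogeneous. When $l=n$ the only $n$-subset is $\Omega$, so $(k,n)$-ut reads: for every $k$-set $A$ and every $k$-partition $P$ of $\Omega$ some translate $Ag$ is a transversal for $P$, which is verbatim the $k$-ut property.

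For~\eqref{wardb} I would combine~\eqref{warda} with the equivalence between the $2$-ut property and primitivity established in~\cite{ac}: by~\eqref{warda} the $(2,n)$-ut property is the $2$-ut property, so it remains to recall that a group has $2$-ut exactly when it is primitive. I would sketch this through orbital graphs: for a pair $\{a,b\}$ and a splitting $\Omega=B_1\cup B_2$ into nonempty parts, a translate of $\{a,b\}$ crosses the splitting iff the orbital graph carrying the edge $\{a,b\}$ has an edge between $B_1$ and $B_2$; thus $2$-ut fails for some pair and some splitting iff some orbital graph is disconnected, and a transitive group is primitive iff all its orbital graphs are connected. The intransitive case is disposed of directly, since a pair chosen inside a single orbit can never be moved across the partition of $\Omega$ into that orbit and its complement.

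The substance is~\eqref{wards}, which I would prove by induction; it suffices to pass from $(k,l)$-ut to $(k,l+1)$-ut for $l<n$. Given a $k$-set $A$, an $(l+1)$-set $B'$, and a $k$-partition $P'=\{B'_1,\dots,B'_k\}$ of $B'$, note that $l+1>k$, so by pigeonhole some block, say $B'_1$, has at least two points; fix $x\in B'_1$ and put $B=B'\setminus\{x\}$ and $P=\{B'_1\setminus\{x\},B'_2,\dots,B'_k\}$, a $k$-partition of the $l$-set $B$. Applying $(k,l)$-ut to $A,B,P$ produces $g\in G$ with $Ag$ a transversal for $P$, and iterating the construction from $l$ up to $n$ then yields $(k,m)$-ut for every $l\le m\le n$. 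The one genuine idea, and the step I would check most carefully, is the final verification that $Ag$ is also a transversal for $P'$: a priori the point that $g$ supplies for the block $B'_1\setminus\{x\}$ could coincide with $x$ and overfill $B'_1$. This cannot happen, because $|Ag|=k$ equals the number of blocks of $P$, so being a transversal for $P$ forces $Ag$ to contain exactly one point of each block and in particular to lie inside $B=B'\setminus\{x\}$; hence $x\notin Ag$, the set $Ag$ meets $B'_1=(B'_1\setminus\{x\})\cup\{x\}$ exactly once and each of $B'_2,\dots,B'_k$ exactly once, and therefore is a transversal for $P'$.
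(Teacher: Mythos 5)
Your proposal is correct and follows essentially the same route as the paper: part~\eqref{warda} by unwinding the definition, part~\eqref{wardb} by citing the known equivalence of the $2$-ut property with primitivity, and part~\eqref{wards} by the same inductive step of deleting a point from a non-singleton block of the partition and invoking $(k,l)$-ut on the resulting $l$-set. Your extra verification that the resulting transversal avoids the deleted point (since a $k$-set transversal of a $k$-partition of $B$ must lie inside $B$) is a detail the paper leaves implicit, but the argument is the same.
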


\begin{proof}
Part~\eqref{warda} is clear and part~\eqref{wardb} follows from~\cite[Theorem~$1.8$]{ac}.

For part~\eqref{wards}, suppose that $G$ has the
$(k,l)$-ut property, with $l< n$. We show that $G$ has the $(k,l+1)$-ut property. To this end,  let $\Omega$ be the domain of $G$, let $A\subseteq \Omega$ be a $k$-subset, let $B \subseteq \Omega$ be an $(l+1)$-subset, and let $P$ be a
$k$-partition of $B$. Let $b$ be an element of $B$ lying in a part of $P$ with more than one element: observe that this is possible from the pigeonhole principle  because $l+1>l\ge k$.  Now, let $\bar{B}$ be the set $B\setminus \{b\}$ and let $\bar{P}$ be the partition obtained from $P$ by removing $b$ from the part containing $b$ and leaving all other parts unchanged. Since $G$ has the $(k,l)$-ut property, some $G$-conjugate of $A$ is a section for $\bar{P}$ and hence also a section for $P$. Applying this repeatedly
proves part~\eqref{wards}.
\end{proof}

As already noted, Proposition~\ref{up}~\eqref{wards} implies that, for a permutation group $G$
satisfying  the $k$-ut property, there is a threshold $t(G,k)\ge k$
such that $G$ satisfies $(k,l)$-ut  if and only if $n \ge l\ge t(G,k)$. Much of our
investigation will concern the value of this threshold.

\begin{prop}\label{up2}
Let $2\le k\le l$, let $G$ be a permutation group on $\Omega$ of degree $n$ having the 
property that the setwise stabiliser of any $k$-subset acts transitively on it, and let $a\in\Omega$. 
\begin{enumerate}
\item\label{partb}
If $G$ has the $(k,l)$-ut property, then the point stabiliser $G_a$ in its action on $\Omega\setminus\{a\}$ has the $(k-1,l-1)$-ut property.
\item\label{partc}The group $G$ is $(k-1)$-homogeneous.
\end{enumerate}
In particular, if $G$ has the $k$-ut property, then $t(G,k)\ge t(G_a,k-1)+1$.

\label{p:induct}
\end{prop}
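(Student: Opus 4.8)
The plan is to prove both parts by transferring configurations between the action of $G$ on $\Omega$ and that of $G_a$ on $\Omega\setminus\{a\}$, using the setwise-transitivity hypothesis as a repair tool. For part~\eqref{partb} I would argue as follows. Given a $(k-1)$-subset $A'$, an $(l-1)$-subset $B'$ of $\Omega\setminus\{a\}$, and a $(k-1)$-partition $P'$ of $B'$, first \emph{lift} the data to $\Omega$ by setting $A=A'\cup\{a\}$, $B=B'\cup\{a\}$ and $P=P'\cup\{\{a\}\}$. Since $a\notin B'$, these are respectively a $k$-subset, an $l$-subset, and a genuine $k$-partition of $B$ in which $\{a\}$ is a singleton part. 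Applying the $(k,l)$-ut property to $A$, $B$, $P$ produces $g\in G$ with $Ag$ a transversal for $P$; as $\{a\}$ is a part, the transversal must meet it, so $a\in Ag$. The point where the hypothesis is needed is that $g$ need not fix $a$. To remedy this, observe that $Ag$ is a $k$-subset containing both $a$ and $ag$, so by hypothesis its setwise stabiliser contains an element $h$ with $(ag)h=a$. Then $f=gh$ fixes $a$ (hence $f\in G_a$ permutes $\Omega\setminus\{a\}$), satisfies $Af=(Ag)h=Ag$, and so $A'f=Af\setminus\{a\}$ is a transversal for $P'$ lying in $\Omega\setminus\{a\}$. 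This establishes $(k-1,l-1)$-ut for $G_a$.

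For part~\eqref{partc} I would go directly for $(k-1)$-homogeneity. Fix a $k$-subset $A=\{a_1,\dots,a_k\}$; the hypothesis makes $G_{\{A\}}$ transitive on $A$, and hence its single-point deletions $A\setminus\{a_i\}$ all lie in one $G$-orbit of $(k-1)$-subsets. It then suffices to show that every $(k-1)$-subset $Y$ is $G$-equivalent to one of these. Choosing a set $Z$ disjoint from $Y$ with $|Z|=l-k+1$, form $B=Y\cup Z$ (an $l$-set) and the $k$-partition of $B$ consisting of the $k-1$ singletons of $Y$ together with the single block $Z$. An application of $(k,l)$-ut gives $g$ with $Ag$ a transversal for this partition, forcing $Y\subseteq Ag$ and $Ag\cap Z=\{w\}$ for one point $w$, so $Ag=Y\cup\{w\}$. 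Writing $a_i=wg^{-1}\in A$, we get $Y=(A\setminus\{a_i\})g$, so $Y$ lies in the orbit of a deletion of $A$. Thus all $(k-1)$-subsets form a single orbit.

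The threshold inequality is then immediate from part~\eqref{partb}. Since the $k$-ut property is $(k,n)$-ut, taking $l=n$ shows that $G_a$, of degree $n-1$, has the $(k-1,n-1)$-ut, i.e. the $(k-1)$-ut property, so $t(G_a,k-1)$ is defined; taking $l=t(G,k)$ shows $G_a$ has $(k-1,t(G,k)-1)$-ut, whence $t(G_a,k-1)\le t(G,k)-1$. I expect the only genuine obstacle to be the issue flagged in part~\eqref{partb}: the matching element supplied by $(k,l)$-ut places $a$ in the image of $A$ but need not fix it, and it is precisely the setwise-transitivity hypothesis that lets one post-compose by a stabiliser element to descend into $G_a$. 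The same mechanism, used to identify the deletions $A\setminus\{a_i\}$, is what powers part~\eqref{partc}; everything else is routine bookkeeping about sizes of sets and parts.
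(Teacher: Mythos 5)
Your argument for part~\eqref{partb} is correct and essentially the paper's: you lift $(A',B',P')$ to $(A'\cup\{a\},\,B'\cup\{a\},\,P'\cup\{\{a\}\})$, apply $(k,l)$-ut, and then use transitivity of a setwise stabiliser on a $k$-set to adjust the resulting $g$ into $G_a$. The paper premultiplies $g$ by an element of the setwise stabiliser of $A'\cup\{a\}$ where you postmultiply by an element of the setwise stabiliser of its image; this is a cosmetic difference. Your deduction of $t(G,k)\ge t(G_a,k-1)+1$ from part~\eqref{partb} is also the intended one.

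Part~\eqref{partc} is where there is a genuine gap. As stated (and as proved in the paper), $(k-1)$-homogeneity is asserted to follow from the setwise-stabiliser hypothesis \emph{alone}; it is not conditional on $G$ having the $(k,l)$-ut property. Your proof invokes $(k,l)$-ut in an essential way (to force $Ag=Y\cup\{w\}$ for an arbitrary $(k-1)$-set $Y$), so it only establishes the weaker conditional statement. The paper's argument avoids this: if $Y$ and $Y'$ are $(k-1)$-subsets with $|Y\cap Y'|=k-2$, then $Y\cup Y'$ is a $k$-subset, and transitivity of its setwise stabiliser supplies an element $h$ preserving $Y\cup Y'$ and sending the unique point of $Y'\setminus Y$ to the unique point of $Y\setminus Y'$; such an $h$ maps $Y$ to $Y'$. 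Chaining through intermediate $(k-1)$-sets (induction on the size of the intersection) then shows all $(k-1)$-subsets lie in a single orbit. Note that your own opening observation --- that the one-point deletions of a fixed $k$-set all lie in one $G$-orbit --- is exactly this mechanism, so the repair is small: iterate it instead of appealing to the ut property.
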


\paragraph{Remark}
This is not the first time that the hypothesis in this
proposition has occurred in the literature and, for instance, it is relevant for investigating min-wise independent sets of permutations, see~\cite[Corollary~$1$]{CS}. In particular, groups satisfying this hypothesis are classified in~\cite[Theorem~$2$]{CS}.

\begin{proof}
We start by proving~\eqref{partb}: suppose that $G$ is a permutation group of degree $n$ having the property that the setwise stabiliser of any $k$-subset acts transitively on it and suppose that $G$ has the $(k,l)$-ut property. 

Let $A$ be a $(k-1)$-subset of $\Omega\setminus\{a\}$, let $B$ be
 an $(l-1)$-subset of $\Omega\setminus\{a\}$,  and let
$P$ be a $(k-1)$-partition of $B$. 
By assumption, the $k$-subset
$\{a\}\cup A$ can be mapped to a transversal of the $k$-partition
$\{\{a\}\}\cup P$ of $\{a\}\cup B$ by an element $g\in G$. By transitivity, we
can premultiply $g$ by an element of the setwise stabiliser of $\{a\}\cup A$
to ensure that $a$ maps into the part $\{a\}$ of the
partition. The resulting element of $G$ belongs to the stabiliser of $a$
and maps $A$ to a transversal for $P$. Thus part~\eqref{partb} is proven.

Part~\eqref{partb} immediately implies $t(G,k)\ge t(G_a,k-1)+1$ for groups satisfying the $k$-ut property. For part~\eqref{partc}, observe that the hypothesis in this proposition implies that we can move, via elements in $G$, a $(k-1)$-subset to any $(k-1)$-subset which
meets it in $k-2$ points, and the general case follows by induction.
\end{proof}

\medskip

\paragraph{Remark}From~\cite[Theorem~$3.3$]{ac}, we see that for $n>k>\lfloor (n+1)/2\rfloor$, $k$-ut is equivalent to
$k$-homogeneity. In particular, for a fixed $k$ with $n>k>\lfloor (n+1)/2\rfloor$, all the $(k,l)$-ut  properties for $k\le l\le n$ are equivalent by Proposition~\ref{up}. Therefore, for our investigation we need to consider
only $k\le\lfloor(n+1)/2\rfloor$. However, we will occasionally replace this bound with $k\le \max\{\lfloor (n+1)/2\rfloor,n-6\}$ to facilitate our semigroup application.

Observe that when $k=n$, every permutation group satisfies the $k$-ut property.

\medskip

We now note the implications of an important result from \cite{ac} for our
problem.

\begin{prop}\label{Living}
Let $G$ be a permutation group of degree $n$. Suppose that $G$ has the $(k,l)$-ut property with $2\le k\le \max\{\lfloor (n+1)/2\rfloor,n-6\}$.
Then either $G$ has the $(k-1,m)$-ut property for every $m$ with $k-1\le m\le n$ or one of the following holds:
\begin{enumerate}
\item $n=5$, $k=3$, $t(G,k)=5$ and $G$ is cyclic or dihedral,
\item $n=7$, $k=4$, $t(G,k)=7$ and $G$ is isomorphic to $\mathrm{AGL}(1,7)$.
\end{enumerate}
\end{prop}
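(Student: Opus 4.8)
The plan is to reduce the hypothesis to the plain $k$-ut property and then feed it into the descent result of \cite{ac} together with Proposition~\ref{up2}. First, by Proposition~\ref{up}\eqref{wards} the possession of $(k,l)$-ut for one value of $l$ forces $(k,n)$-ut, which is the $k$-ut property by Proposition~\ref{up}\eqref{warda}; so I may assume throughout that $G$ has $k$-ut. Dually, the target conclusion collapses to a single statement: $G$ is $(k-1)$-homogeneous. Indeed $(k-1)$-homogeneity is exactly $(k-1,k-1)$-ut by Proposition~\ref{up}\eqref{warda}, and Proposition~\ref{up}\eqref{wards} then propagates it upward to $(k-1,m)$-ut for every $m$ with $k-1\le m\le n$. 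Hence it suffices to prove that a $k$-ut group in the given range is $(k-1)$-homogeneous unless it is one of the two listed exceptions.

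The boundary case $k=2$ is immediate: here $2$-ut is primitivity by Proposition~\ref{up}\eqref{wardb}, and a primitive group is transitive, i.e.\ $1$-homogeneous, with no exception. For $k\ge 3$ I would split on the size of $k$. If $\lfloor(n+1)/2\rfloor<k\le n-6$, then by the remark preceding the proposition (that is, \cite[Theorem~3.3]{ac}) $k$-ut coincides with $k$-homogeneity; passing to complements, $G$ is $(n-k)$-homogeneous with $6\le n-k<n/2$, and since a group that is $j$-homogeneous with $6\le j<n/2$ is $j$-transitive, the classification of the finite $6$-transitive groups forces $G\in\{S_n,A_n\}$, which is certainly $(k-1)$-homogeneous. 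Thus this upper band contributes no exceptions.

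The substance lies in the range $3\le k\le\lfloor(n+1)/2\rfloor$, and here I would invoke the important result of \cite{ac}, read as asserting that a $k$-ut group in this range has the property that the setwise stabiliser of every $k$-subset acts transitively on that subset, the only exceptions being $(n,k,G)=(5,3,\mathrm{C}_5)$, $(5,3,\mathrm{D}_{10})$ and $(7,4,\mathrm{AGL}(1,7))$. For all non-exceptional $G$ this is precisely the hypothesis of Proposition~\ref{up2}, so Proposition~\ref{up2}\eqref{partc} delivers that $G$ is $(k-1)$-homogeneous, and the first paragraph finishes the argument. For the three exceptional triples I would verify directly that $(k-1)$-homogeneity fails and that the threshold is maximal: $\mathrm{C}_5$ and $\mathrm{D}_{10}$ both preserve the partition of the $10$ pairs of a pentagon into $5$ edges and $5$ diagonals, so neither is $2$-homogeneous, while $|\mathrm{AGL}(1,7)|=42$ forbids a transitive action on the $35$ triples of a $7$-set, so it is not $3$-homogeneous; a short finite check then gives $t(G,k)=n$ in each case. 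The two degree-$5$ groups merge into alternative~(a) and $\mathrm{AGL}(1,7)$ is alternative~(b).

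The main obstacle is the faithful use of the cited \cite{ac} result: the whole dichotomy rests on knowing that, in the range $3\le k\le\lfloor(n+1)/2\rfloor$, the only $k$-ut groups whose $k$-set setwise stabilisers fail to act transitively are the three listed triples, since that is exactly the point at which Proposition~\ref{up2}\eqref{partc} can or cannot be applied. Pinning down that exception list is the heavy lifting already carried out in \cite{ac}; the contribution of the present argument is merely the repackaging through Propositions~\ref{up} and~\ref{up2} and the routine verification of the exceptional groups.
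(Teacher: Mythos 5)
Your overall architecture coincides with the paper's: reduce $(k,l)$-ut to $k$-ut via Proposition~\ref{up}, reduce the conclusion to $(k-1)$-homogeneity via Proposition~\ref{up}\eqref{warda} and \eqref{wards}, dispose of the band $\lfloor(n+1)/2\rfloor<k\le n-6$ by passing to $(n-k)$-homogeneity and invoking Livingstone--Wagner to land in $S_n$ or $A_n$, and settle the exceptional triples by a finite computation giving $t(G,k)=n$. All of that matches the paper's proof step for step. The one place you deviate is also the one place your argument is shaky: for $3\le k\le\lfloor(n+1)/2\rfloor$ you interpose the claim that $k$-ut forces the setwise stabiliser of every $k$-subset to act transitively on it (with exactly the three exceptional triples), and then pass through Proposition~\ref{up2}\eqref{partc} to reach $(k-1)$-homogeneity. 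The paper does not take this detour: it cites \cite[Proposition~4.3(2)]{ac} for the $(k-1)$-homogeneity dichotomy \emph{directly} (noting that the cited result lists five exceptions, of which only three actually occur). Your intermediate statement is not what is being cited, and nothing in the present paper licenses it; indeed, Proposition~\ref{up2} treats the setwise-stabiliser condition as an independent hypothesis, with its own classification in \cite[Theorem~2]{CS}, rather than as a consequence of $k$-ut. So either you must locate and quote a result of \cite{ac} that really does assert the stabiliser-transitivity with that exact exception list (and you yourself flag that you have not done so), or the detour through Proposition~\ref{up2} is unnecessary and you should cite the homogeneity conclusion of \cite[Proposition~4.3(2)]{ac} directly, as the paper does. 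As written, this middle step is an unproved assertion and constitutes the only genuine gap; everything surrounding it is sound and agrees with the published argument.
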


\begin{proof}
If $G$ has $(k,l)$-ut, then it has $(k,n)$-ut from Proposition~\ref{up}~\eqref{wards}. Therefore $G$ has $k$-ut from Proposition~\ref{up}~\eqref{warda}.
 By \cite[Proposition 4.3(2)]{ac}, we deduce that one of the following holds:
\begin{itemize}
\item $G$ is $(k-1)$-homogeneous,
\item $n=5$, $k=3$ and $G$ is cyclic or dihedral,
\item $n=7$, $k=4$ and $G$ is isomorphic to $\mathrm{AGL}(1,7)$,
\item $k>\lfloor (n+1)/2\rfloor$.
\end{itemize}
(Observe that~\cite[Proposition 4.3(2)]{ac} lists five exceptions; however, by a computation one can easily check that only three actually arise.) 

If $G$ is $(k-1)$-homogeneous, then $G$ satisfies $(k-1,k-1)$-ut from Proposition~\ref{up}~\eqref{warda}. Therefore, by Proposition~\ref{up}~\eqref{wards}, $G$
satisfies $(k-1,m)$-ut for any $m$ with $k-1\le m\le n$.

In the second and in the third case, a computation yields $t(G,k)=n$ as required, and we obtain the two exceptions listed in the statement of this lemma.

Finally, suppose $k>\lfloor (n+1)/2\rfloor$. As $k\le \max\{(n+1)/2,n-6\}$, we have $k\le n-6$. From the remark preceding Proposition~\ref{Living}, we deduce $G$ is $k$-homogeneous and hence $(n-k)$-homogeneous. As $n-k\ge 6$, from the Livingstone-Wagner theorem, we get that $G$ is $6$-transitive and hence $G$ is the symmetric or the alternating group of degree $n$. In particular, the lemma follows immediately.
\end{proof}

Proposition~\ref{Living} is a natural analogue of the Livingstone--Wagner theorem~\cite{lw} and it will be crucial
for our application to semigroups.


\section{The case $k=1$}\label{case1}

\begin{prop}
Let $G$ be a permutation group of degree $n$, let $1\le l\le n$ and let $d$ be the size of a smallest
$G$-orbit. Then $G$
possesses the $(1,l)$-ut property if and only if every orbit has size at least
$n-l+1$. Equivalently  $t(G,1)=n-d+1$. 
\label{k=1}
\end{prop}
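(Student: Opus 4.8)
The plan is to unwind the definition of $(1,l)$-ut in this degenerate case and reduce it to a statement about orbit sizes. First I would observe that when $k=1$, a $1$-subset is a singleton $A=\{a\}$, a $1$-partition $P$ of an $l$-subset $B$ is forced to be the trivial partition $\{B\}$ with a single block, and a transversal for such a partition is exactly a single point of $B$. Since $g\in G$ acts as a permutation, $Ag=\{ag\}$ is again a singleton, so the condition ``$Ag$ is a transversal for $P$'' is simply $ag\in B$. Hence $G$ has the $(1,l)$-ut property if and only if, for every $a\in\Omega$ and every $l$-subset $B$ of $\Omega$, there exists $g\in G$ with $ag\in B$; equivalently, the orbit $a^G$ of every point $a$ meets every $l$-subset of $\Omega$.

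Next I would translate the condition ``$a^G$ meets every $l$-subset'' into a size inequality. The orbit $a^G$ fails to meet some $l$-subset precisely when its complement $\Omega\setminus a^G$ contains an $l$-subset, that is, when $|\Omega\setminus a^G|\ge l$. Therefore $a^G$ meets every $l$-subset if and only if $|\Omega\setminus a^G|\le l-1$, i.e.\ $|a^G|\ge n-l+1$. Running this over all points $a$, and noting that each orbit is of the form $a^G$ for a suitable $a$, I conclude that $G$ has the $(1,l)$-ut property if and only if every $G$-orbit has size at least $n-l+1$, which is the first assertion.

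Finally, for the threshold, the smallest orbit (of size $d$) gives the binding constraint: every orbit has size at least $n-l+1$ exactly when $d\ge n-l+1$, i.e.\ when $l\ge n-d+1$. Thus $(1,l)$-ut holds if and only if $l\ge n-d+1$, which in view of the monotonicity in $l$ from Proposition~\ref{up}~\eqref{wards} identifies the threshold as $t(G,1)=n-d+1$. As a sanity check, taking $l=n$ yields the constraint $d\ge 1$, recovering the fact that every permutation group has the $1$-ut property. There is no real obstacle in this argument: the only point requiring care is the correct interpretation of a $1$-partition and its transversal, after which the result follows from a direct count of the orbit complements.
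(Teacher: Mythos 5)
Your proposal is correct and follows the same route as the paper's proof: reduce $(1,l)$-ut to the condition that every orbit meets every $l$-subset, and then observe that this is equivalent to every orbit having size at least $n-l+1$. You merely spell out in more detail the unwinding of the definition and the derivation of the threshold, which the paper leaves implicit.
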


\begin{proof}
The group $G$ has the $(1,l)$-ut property if and only if every $G$-orbit intersects
every $l$-subset, which is true if and only if every $G$-orbit has size at least
$n-l+1$.
\end{proof}

\begin{cor}Let $G$ be a permutation group of degree $n$. If $G$ is transitive, then $t(G,1)=1$; if $G$ is intransitive, then  $t(G,1)\ge n/2+1$. 
\end{cor}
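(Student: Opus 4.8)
The plan is to deduce both assertions directly from Proposition~\ref{k=1}, which gives the closed formula $t(G,1)=n-d+1$, where $d$ is the size of a smallest $G$-orbit. Once this formula is in hand, the corollary reduces entirely to bounding $d$ in each of the two cases, so no further work with the $(1,l)$-ut property itself is needed.

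First I would dispose of the transitive case. If $G$ is transitive on $\Omega$, then $\Omega$ consists of a single orbit, so $d=n$, and substituting into the formula gives $t(G,1)=n-n+1=1$, as claimed.

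Next I would handle the intransitive case. Here $\Omega$ decomposes into $r\ge 2$ orbits $O_1,\dots,O_r$ whose sizes partition $n$, and $d=\min_i|O_i|$. A counting argument then yields $n=\sum_{i=1}^{r}|O_i|\ge rd\ge 2d$, so $d\le n/2$. Feeding this into the formula gives $t(G,1)=n-d+1\ge n-(n/2)+1=n/2+1$, which is exactly the desired lower bound.

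Since the whole argument is a substitution into an already-proved identity, there is no genuine obstacle here; the result is essentially a corollary in the literal sense. The only point that deserves a moment's care is the inequality $d\le n/2$ in the intransitive case, which is the elementary pigeonhole observation that, whenever an $n$-set is split into at least two nonempty parts, the smallest part has at most $n/2$ elements.
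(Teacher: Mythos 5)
Your proposal is correct and matches the paper's argument: both cases are settled by substituting bounds on the smallest orbit size $d$ into the formula $t(G,1)=n-d+1$ of Proposition~\ref{k=1} (the paper phrases the transitive case as $G$ having $(1,m)$-ut for all $m$, which amounts to the same substitution $d=n$). No gaps.
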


\begin{proof}
If $G$ is transitive, then $G$ has $(1,m)$-ut, for every $1\le m\le n$, and hence $t(G,1)=1$. If $G$ is intransitive, then the smallest $G$-orbit has size at most $n/2$ and hence by Proposition~\ref{k=1} $t(G,1)\ge n-n/2+1=n/2+1$.
\end{proof}

So the threshold $t(G,1)$ cannot lie between $2$ and $(n+1)/2$. We will see that
this is an instance of a general phenomenon.

\section{The case $k=2$}\label{case2}

Recall, for instance from~\cite[Theorem~$1.8$]{ac2}, that the following conditions are equivalent: 
\begin{itemize} 
\item $G$ is primitive; 
\item every
orbital graph for $G$ (graph whose edges form a $G$-orbit on $2$-subsets) is
connected; and 
\item $G$ has the $2$-ut property.
\end{itemize}
We remark that this result uses the convention that the trivial group acting on two elements is primitive. 

\begin{prop}\label{k=2}
Let $G$ be a primitive permutation group of degree $n$, let $l\in\mathbb{N}$ and let $d$ be the
smallest valency of an orbital graph for $G$. Then $G$ possesses the $(2,l)$-ut property if
and only if $l\ge n-d+1$. Equivalently, $t(G,2)=n-d+1$.
\end{prop}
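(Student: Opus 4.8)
The plan is to translate the $(2,l)$-ut property into a statement about the connectivity of induced subgraphs of the orbital graphs, and then to pin down the threshold using the fact that for these highly symmetric graphs the vertex-connectivity equals the valency. First I would reformulate the property. Fix a $2$-subset $A$; its $G$-orbit is exactly the edge set of some orbital graph $\Gamma$, and every element of the orbit is a $2$-subset $Ag$. Given an $l$-subset $B$ with a $2$-partition $P=\{B_1,B_2\}$, a translate $Ag$ is a transversal for $P$ precisely when $Ag\subseteq B$ and $Ag$ meets both blocks, i.e. when $Ag$ is an edge of $\Gamma$ lying inside $B$ that crosses the partition. Hence the orbit of $A$ contains a transversal for \emph{every} $2$-partition of $B$ if and only if the induced subgraph $\Gamma[B]$ is connected. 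Quantifying over all $2$-subsets $A$ (equivalently over all orbital graphs $\Gamma$) and all $l$-subsets $B$, I obtain the clean restatement: $G$ has the $(2,l)$-ut property if and only if $\Gamma[B]$ is connected for every orbital graph $\Gamma$ and every $l$-subset $B$.

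The easy inclusion is the failure direction. Suppose $l\le n-d$ and let $\Gamma$ be an orbital graph of minimum valency $d$. Pick a vertex $v$; since $v$ has $n-1-d\ge l-1$ non-neighbours, I can choose $B=\{v\}\cup N$ with $N$ a set of $l-1$ non-neighbours of $v$. Then $v$ is isolated in $\Gamma[B]$, so $\Gamma[B]$ is disconnected and the corresponding partition admits no crossing edge; thus $(2,l)$-ut fails. This shows that $(2,l)$-ut forces $l\ge n-d+1$.

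For the converse, suppose $l\ge n-d+1$ and assume for contradiction that $\Gamma[B]$ is disconnected for some orbital graph $\Gamma$ and some $l$-subset $B$. Writing $S=\Omega\setminus B$, the set $S$ separates the (at least two, nonempty) components of $\Gamma[B]$, so $S$ is a vertex cut of the connected graph $\Gamma$ and $|S|\ge\kappa(\Gamma)$, where $\kappa(\Gamma)$ denotes vertex-connectivity. Here is where the main work lies: I need $\kappa(\Gamma)\ge d$. Because $G$ is transitive on vertices and transitive on the edges of $\Gamma$, the graph $\Gamma$ is connected (by primitivity, via the characterisation recalled before the statement) and both vertex- and edge-transitive; so by the classical connectivity theorem for edge-transitive graphs (Mader/Watkins) its vertex-connectivity equals its valency, $\kappa(\Gamma)=d_\Gamma\ge d$. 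This yields $n-l=|S|\ge\kappa(\Gamma)\ge d$, contradicting $l\ge n-d+1$. Hence $\Gamma[B]$ is connected for all $\Gamma$ and all such $B$, so $(2,l)$-ut holds.

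Combining the two directions shows that $(2,l)$-ut holds exactly when $l\ge n-d+1$, i.e. $t(G,2)=n-d+1$. The one genuinely nontrivial ingredient, and the step I expect to be the main obstacle, is the connectivity bound $\kappa(\Gamma)\ge d$; everything else is the routine dictionary between transversals, crossing edges, and vertex cuts. If one prefers to avoid quoting the Mader/Watkins theorem, the alternative is to prove $\kappa(\Gamma)=d_\Gamma$ directly for orbital graphs, exploiting edge-transitivity to argue that a minimal vertex cut cannot be smaller than the valency.
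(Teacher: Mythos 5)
Your proof is correct and follows essentially the same route as the paper: reformulate $(2,l)$-ut as connectivity of the induced subgraphs $\Gamma[B]$ of every orbital graph, exhibit a disconnected induced subgraph on a vertex plus $l-1$ non-neighbours when $l\le n-d$, and for $l\ge n-d+1$ use the fact that the vertex-connectivity of an orbital graph equals its valency, so that deleting the at most $d-1$ vertices of $\Omega\setminus B$ cannot disconnect it. The only (cosmetic) difference is that you invoke the Watkins/Mader connectivity theorem via vertex- and edge-transitivity of the orbital graph, whereas the paper appeals to the version for vertex-primitive graphs, which it notes is only implicit in Watkins' proof.
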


\begin{proof}Let $\Omega$ be the domain of $G$.
We observe first that $G$ has $(2,l)$-ut if and only if the induced subgraph
of any orbital graph on any $l$-subset of $\Omega$ is connected. For the
requirement is that each orbital graph has an edge which is a transversal for
every $2$-partition of every $l$-subset.

Suppose that $l\le n-d$. Let $\Gamma$ be an orbital graph of valency $d$ and let $B$ be an $l$-subset consisting of a vertex $v$ and $l-1$ non-neighbours of
$v$. Then the induced subgraph of $\Gamma$ on $B$ is disconnected.

Suppose that $l>n-d$. Let $\Gamma$ be an orbital graph for $G$ and let $B$ be an $l$-subset of $\Omega$. 
Now, we use the result of Watkins~\cite{watkins} asserting that
the vertex-connectivity of a vertex-primitive graph is equal to its valency.
(Watkins does not state this but it is implicit in his proof.) In our
case, every orbital graph has connectivity at least $d$, meaning that the
deletion of $d-1$ or fewer vertices leaves a connected graph. Since  $|\Omega\setminus B|=n-l\le d-1$, we deduce that the subgraph induced by $\Gamma$ on $B$ is connected.
\end{proof}

\begin{cor}\label{cor:=k=2}
Let $G$ be a primitive permutation group of degree $n$.
If $G$  is $2$-homogeneous, then $t(G,2)=2$; if $G$ is not $2$-homogeneous, then $t(G,2)\ge (n+3)/2$.
\end{cor}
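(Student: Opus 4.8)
<br>

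The final statement is Corollary~\ref{cor:=k=2}, which asserts a dichotomy for the threshold $t(G,2)$: it is either $2$ (when $G$ is $2$-homogeneous) or at least $(n+3)/2$ (otherwise). The plan is to deduce both parts directly from the formula $t(G,2)=n-d+1$ established in Proposition~\ref{k=2}, where $d$ is the smallest valency among the orbital graphs of $G$. So the entire task reduces to controlling the parameter $d$ in the two cases.

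For the first assertion, I would observe that $2$-homogeneity means $G$ acts transitively on $2$-subsets, so there is a single orbital graph, namely the complete graph $K_n$, which has valency $d=n-1$. Substituting into the formula gives $t(G,2)=n-(n-1)+1=2$, as claimed. (One should note the degenerate convention for $n=2$ mentioned just before Proposition~\ref{k=2}, but the arithmetic is unaffected.)

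For the second assertion, the content lies in bounding the smallest orbital valency $d$ from above when $G$ is primitive but not $2$-homogeneous. In this situation $G$ has at least two orbital graphs (counting the non-diagonal orbitals), and the valencies of these graphs sum to $n-1$ since they partition the off-diagonal pairs incident to a fixed vertex. With at least two summands, the smallest valency satisfies $d\le (n-1)/2$. Feeding this into $t(G,2)=n-d+1$ yields
\[
t(G,2)=n-d+1\ge n-\frac{n-1}{2}+1=\frac{n+3}{2},
\]
which is exactly the desired bound.

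I do not anticipate a genuine obstacle here, since Proposition~\ref{k=2} does all the heavy lifting; the only point requiring a little care is the accounting for the second case. One must be sure that a primitive, non-$2$-homogeneous group really does have at least two distinct nontrivial orbitals (rather than a single orbital of valency $n-1$), which is precisely the failure of transitivity on $2$-subsets. It is worth confirming that orbital graphs here are treated as undirected (so that one is summing over orbits on unordered $2$-subsets and the valencies genuinely add up to $n-1$); the self-paired versus non-self-paired distinction does not affect the valency-sum argument, but it is the kind of detail that should be checked to ensure $d\le (n-1)/2$ holds without exception.
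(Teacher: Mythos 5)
Your proposal is correct and follows essentially the same route as the paper: both cases are deduced from the formula $t(G,2)=n-d+1$ of Proposition~\ref{k=2}, with the non-$2$-homogeneous case handled by noting that at least two orbital graphs force the smallest valency to be at most $(n-1)/2$. The paper only writes out the second case explicitly, so your treatment of the $2$-homogeneous case (single orbital graph $K_n$ of valency $n-1$) is a harmless elaboration of the same argument.
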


\begin{proof}
If $G$ is not $2$-homogeneous, then there are at least two orbital
graphs, and so the smallest valency of an orbital graph is at most $(n-1)/2$. Hence, by Proposition~\ref{k=2}, $t(G,2)\ge n-(n-1)/2+1=(n+3)/2$.
\end{proof}

In particular, the threshold $t(G,2)$ cannot lie between $3$ and $n/2+1$. We can actually slightly improve Corollary~\ref{cor:=k=2}.

\begin{cor}\label{c:k=2}
Let $G$ be a primitive permutation group of degree $n>2$. Then
\begin{enumerate}
\item $G$ has $(2,n-1)$-ut.
\item\label{partb1} Either $G$ has $(2,n-2)$-ut, or $n$ is prime and $G$ is the cyclic or
the dihedral group of degree $n$.
\end{enumerate}
\end{cor}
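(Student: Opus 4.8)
The plan is to use the characterization from the proof of Proposition~\ref{k=2}, namely that $G$ has $(2,l)$-ut precisely when the induced subgraph on every $l$-subset of every orbital graph is connected, together with the threshold formula $t(G,2)=n-d+1$, where $d$ is the smallest valency of an orbital graph for $G$. For part~(a), I want to show $G$ has $(2,n-1)$-ut. Since $G$ is primitive of degree $n>2$, it is transitive and every orbital graph is connected with valency $d\ge 1$; in fact, since $G$ is primitive and $n>2$, there are no fixed points and no orbital graph can be a perfect matching in a degenerate way, so $d\ge 1$ gives $n-d+1\le n$. I would argue more carefully that a connected vertex-transitive graph on $n>2$ vertices has minimum valency at least~$2$, hence $d\ge 2$, so $t(G,2)=n-d+1\le n-1$, which is exactly the statement that $G$ has $(2,n-1)$-ut.

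For part~(b), the goal is to show that either $d\ge 3$ (equivalently $t(G,2)\le n-2$, i.e. $(2,n-2)$-ut holds) or else $G$ is cyclic or dihedral of prime degree. So I would assume $d\le 2$; combined with part~(a)'s observation that $d\ge 2$, this forces $d=2$. The heart of the argument is then to classify primitive vertex-transitive graphs of valency exactly~$2$. A connected $2$-regular graph is a single cycle $C_n$, so one of the orbital graphs is an $n$-cycle. I would then invoke the fact that the automorphism group of $C_n$ is the dihedral group $\mathrm{D}(2*n)$ of order $2n$; since $G$ embeds into this automorphism group and acts primitively, $G$ must be a primitive subgroup of the dihedral group acting on the cycle. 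Primitivity of a subgroup of $\mathrm{D}(2*n)$ in its action on $n$ points forces $n$ to be prime (a dihedral or cyclic group of composite degree $n$ acting on a cycle preserves the blocks coming from proper divisors of $n$, hence is imprimitive). This pins down $G$ as the cyclic group $\mathrm{C}_n$ or the dihedral group $\mathrm{D}(2*n)$ with $n$ prime.

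The main obstacle I anticipate is the precise deduction that an orbital graph of valency~$2$ forces $G$ itself (not merely $\aut(C_n)$) to be cyclic or dihedral of prime degree, and ruling out the appearance of more complicated primitive groups. The subtlety is that $G$ may have several orbital graphs, only one of which has valency $d=2$; I must ensure that possessing a $2$-regular orbital graph really does constrain $G$ as a subgroup of $\aut(C_n)=\mathrm{D}(2*n)$. This follows because $G$ preserves that orbital graph, so $G\le\aut(C_n)$, and then primitivity yields both that $n$ is prime and that the only transitive (hence primitive, since $n$ is prime) subgroups of $\mathrm{D}(2*n)$ are $\mathrm{C}_n$ and $\mathrm{D}(2*n)$ themselves. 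I would also double-check the small-degree boundary behaviour so that the claim $d\ge 2$ in part~(a) is genuinely valid for all primitive $G$ with $n>2$, using that primitivity rules out disconnected orbital graphs and that a connected vertex-transitive graph on more than two vertices cannot have an isolated-looking structure of valency~$1$.
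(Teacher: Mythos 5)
Your proposal is correct and follows essentially the same route as the paper's (much terser) proof: both parts reduce to Proposition~\ref{k=2} via the observation that a connected orbital graph on $n>2$ vertices has valency at least $2$, and that a valency-$2$ orbital graph is a cycle, forcing $G\le\aut(C_n)=\mathrm{D}(2*n)$ and hence, by primitivity, $n$ prime and $G$ cyclic or dihedral. Your write-up simply spells out the details the paper leaves implicit.
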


\begin{proof}
Since orbital graphs are connected, they have valency at least~$2$ when $n>2$. Moreover, if an
orbital graph does have valency $2$, then it is a cycle, and $G$ is  as in (\ref{partb1}). The results now follows from Proposition~\ref{k=2}.
\end{proof}

\paragraph{Remark} Corollary~\ref{c:k=2}
can be pushed a little further. The primitive groups with an orbital
graph of valency $3$ were determined by Wong~\cite{wong}, following the
pioneering work of Sims~\cite{sims} on the connection between permutation
groups and graphs. This was extended to valency $4$ by Wang~\cite{wang},
using preliminary results of Sims and Quirin, and to valency $5$ by
Fawcett \textit{et~al.}~\cite{fglprv}. So, with the exception of the groups
in these classifications, a primitive group has $(2,n-5)$-ut.

However, the proofs increase in length and complexity as the valency increases,
and it is perhaps unlikely that this sequence of results will be continued in the new future.

\medskip

At the other end of the scale, if $G$ is not $2$-homogeneous and has $(2,(n+3)/2)$-ut, then there
must be two orbital graphs each of valency $(n-1)/2$. There are three
possibilities:
\begin{enumerate}
\item the stabiliser of a point has two self-paired suborbits of size $(n-1)/2$;
\item the stabiliser of a point has a self-paired suborbit of size $(n-1)/2$, and two paired
suborbits of size $(n-1)/4$;
\item the stabiliser of a point has four suborbits of size $(n-1)/4$, falling into two pairs of paired suborbits.
\end{enumerate}

In the third case, $G$ has odd order (since an involution in $G$ would imply
the existence of a self-paired suborbit), and so is soluble. Therefore, in the third case, $G$ is an
affine group, that is, $G$ is permutation isomorphic to a subgroup of the affine general linear group $\mathrm{AGL}(V)$, where $V$ is a finite vector space over a finite field of odd order. Then, the group generated by $G$ and $-I_V$ falls under case (a).
In case (a), we can invoke the classification of rank~$3$ primitive groups. The finite primitive groups of rank $3$ of affine type (according to the O'Nan-Scott partition into types) were classified by Liebeck~\cite{LiebeckAffine}. Kantor and Liebler~\cite{KantorLiebler} have classified the rank $3$ primitive permutation groups having socle a finite classical group, and Liebeck and Saxl~\cite{LiebeckSaxl} have classified the rank $3$ primitive permutation groups having socle a finite exceptional group of Lie type or a sporadic simple group. The papers~\cite{KantorLiebler,LiebeckAffine,LiebeckSaxl} contain all the relevant information for the complete classification of rank $3$ primitive permutation groups.
For dealing with Case (a), one might invoke also the classification of $3/2$-transitive groups by
Bamberg \emph{et al.}~\cite{bglps}. The conclusion (for case~(a) and (c)) is that either $G$ is an
affine group, or $G$ is the alternating or the symmetric group of degree $7$ acting on $2$-subsets (with degree $21$).

Case (b) is a little more problematic. In this case $G$ has rank~$4$, and
(arguing as before) we can assume that $G$ is not affine. Except for the primitive
groups of affine type, Cuypers in his D. Phil. thesis~\cite{cuypers} has classified the primitive groups having rank 4. 

There is some recent interest in pushing these classifications a little further for answering a question of Muzychuk concerning coherent configurations, see~\cite{MS}. However, since there are various primitive rank~$3$ permutation groups with nearly
equal subdegrees, we cannot improve the bound any further by using these investigations.

\section{The cases $k\ge3$: preliminaries}\label{case3}

A {\em hypergraph} is an ordered pair $(\Omega,E)$, where $E$ is a subset of the power set of $\Omega$. We refer to the elements of $E$ as the {\em edges} of the hypergraph. A hypergraph is $k$-{\em uniform} if each edge is a $k$-subset. Moreover, we say that a $k$-uniform hypergraph is \emph{regular} if any $(k-1)$-subset is
contained in a constant number of edges, and call this number the
\emph{valency} of the hypergraph. (This terminology is not standard.)

\begin{prop}\label{hyp}
Let $G$ be a permutation group of degree $n$ with the $k$-ut
property and $k\le\max\{(n+1)/2,n-6\}$, let $k\le l\le n$ and let $d_k$ be the smallest valency of a regular
$G$-invariant $k$-uniform hypergraph. If $G$ has the $(k,l)$-ut property,
then $l\ge n-d_k+1$.
\end{prop}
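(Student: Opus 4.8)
The plan is to prove the contrapositive: assuming $l\le n-d_k$, I will exhibit a $k$-subset $A$, an $l$-subset $B$, and a $k$-partition $P$ of $B$ for which no $G$-image of $A$ is a transversal, thereby violating $(k,l)$-ut. This generalises the ``vertex plus non-neighbours'' construction used for $k=2$ in Proposition~\ref{k=2}: there one isolates a vertex $v$ in the induced subgraph by surrounding it with non-neighbours, and here the role of $v$ is played by a $(k-1)$-subset $S$ while the role of its neighbourhood is played by the $d_k$ vertices that extend $S$ to an edge.

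Concretely, I would first fix a regular $G$-invariant $k$-uniform hypergraph $H$ attaining the minimal valency $d_k$ and take $A$ to be any edge of $H$; since $H$ is $G$-invariant, the whole orbit $A^G$ lies inside the edge set of $H$. Next I would pick an arbitrary $(k-1)$-subset $S$ and set $N=\{x\in\Omega\setminus S : S\cup\{x\}\in E(H)\}$, so that regularity gives $|N|=d_k$ with $N\cap S=\emptyset$ built in. Because $|\Omega\setminus(S\cup N)|=n-(k-1)-d_k\ge l-(k-1)\ge 1$, I can choose $T\subseteq\Omega\setminus(S\cup N)$ with $|T|=l-k+1$ and put $B=S\cup T$, an $l$-subset. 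The separating partition is $P=\{\{s\}:s\in S\}\cup\{T\}$, which has exactly $k$ parts.

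The verification is then immediate: any transversal of $P$ must contain every element of $S$ (one from each singleton) together with a single element $t\in T$, hence equals $S\cup\{t\}$; but $t\notin N$, so $S\cup\{t\}$ is not an edge of $H$ and in particular is not in $A^G$. Thus no $g\in G$ sends $A$ to a transversal of $P$, and $(k,l)$-ut fails. Taking the contrapositive yields $l\ge n-d_k+1$. When $n-d_k<k$ the range $k\le l\le n-d_k$ is empty and the conclusion $l\ge n-d_k+1$ holds trivially, so there is nothing to prove in that degenerate case.

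The only point needing genuine care — the analogue of the ``main obstacle'' — is the bookkeeping: verifying that $S$ and $N$ are disjoint and that enough vertices remain outside $S\cup N$ to build $B$, which is exactly where the hypothesis $l\le n-d_k$ is consumed. I note that the group-theoretic hypotheses (the $k$-ut property together with $k\le\max\{(n+1)/2,n-6\}$) are not needed for this direction beyond guaranteeing that $d_k$ is a well-defined minimum; if one prefers, Proposition~\ref{Living} shows that under these hypotheses $G$ is $(k-1)$-homogeneous, so every $G$-orbit on $k$-subsets is already a regular hypergraph and $H$ may be taken to be a single orbit, giving $A^G=E(H)$.
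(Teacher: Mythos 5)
Your proposal is correct and uses essentially the same argument as the paper: isolate a $(k-1)$-subset $K$ by taking singleton parts on $K$ and one large part consisting of points $x$ with $K\cup\{x\}$ not an edge of a minimal-valency hypergraph, so that any transversal hit by the orbit of an edge would need its last point outside $B$. The only cosmetic differences are that you build a witness of size exactly $l$ for each $l\le n-d_k$ (the paper builds one of size $n-d_k$ and invokes the threshold/monotonicity), and you bypass the paper's case split on $(k-1)$-homogeneity, which it uses only to guarantee that a regular $G$-invariant $k$-uniform hypergraph exists.
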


\begin{proof}
If $k\in \{1,2\}$, then the result follows from Propositions~\ref{k=1} and~\ref{k=2}. Suppose then $k\ge 3$. 

Assume that $G$ is
$(k-1)$-homogeneous. Then, every $G$-orbit on $k$-subsets forms a regular hypergraph.
Suppose that
$k\le l\le n-d_k$. Let $A$ be a $k$-subset which is an edge in a
hypergraph $E$
of smallest valency, and let $K$ be any $(k-1)$-subset. Let
\[B=K\cup\{x:K\cup\{x\}\notin E\},\]
so that $B=n-d_k$. Take the partition $P$ of $B$ consisting of singleton parts
containing the points of $K$ with all the rest in a single part. If $Ag$ 
is a
transversal for $P$, then $K\subseteq Ag$, but by 
construction, the
remaining point of $Ag$ lies outside $B$, a contradiction. So $G$ does not have the
$(k,n-d_k)$-ut property, whence $t(G,k)\ge n-d_k+1$.

Assume that $G$ is not $(k-1)$-homogeneous. From Proposition~\ref{Living}, we have only three exceptional cases to consider and in all cases $t(G,k)=n$. Hence the bound $t(G,k)\ge n-d_k+1$ is trivially satisfied.
 \end{proof}

Despite the fact that in Propositions~\ref{k=1} and~\ref{k=2} we obtained an explicit value $n-d_k+1$ for $t(G,k)$ when $k\in \{1,2\}$, we cannot hope to have this equality for $k\ge3$. For instance, $\mathrm{AGL}(1,7)$ has the $3$-ut property, with $t(\mathrm{AGL}(1,7),3)=7$, but the smallest valency of a regular $\mathrm{AGL}(1,7)$-invariant $3$-uniform hypergraph is $2$.

\begin{cor}
Let $G$ be a permutation group of degree $n$ having the $k$-ut property with $k\le \max\{\lfloor (n+1)/2\rfloor,n-6\}$. If $G$ is $k$-homogeneous, then $t(G, k) = k$; if $G$ is not $k$-homogeneous, then $t(G, k) \ge (n +
k+1)/2$.
\end{cor}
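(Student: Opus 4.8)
The plan is to split the statement into its two assertions and handle the second one via the hypergraph bound already established. The first assertion is immediate: if $G$ is $k$-homogeneous then it has the $(k,k)$-ut property by Proposition~\ref{up}\eqref{warda}, and since the threshold always satisfies $t(G,k)\ge k$, this forces $t(G,k)=k$. All the content lies in the second assertion, and my strategy is to feed it into Proposition~\ref{hyp}, which gives $t(G,k)\ge n-d_k+1$, by showing that when $G$ fails to be $k$-homogeneous the smallest valency $d_k$ of a regular $G$-invariant $k$-uniform hypergraph satisfies $d_k\le (n-k+1)/2$. Substituting this bound into $n-d_k+1$ produces exactly $n-(n-k+1)/2+1=(n+k+1)/2$, so the whole problem reduces to this single inequality on $d_k$.

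First I would determine whether $G$ is $(k-1)$-homogeneous. Since $G$ has the $k$-ut property and $k\le\max\{\lfloor(n+1)/2\rfloor,n-6\}$, Proposition~\ref{Living} tells us that either $G$ is $(k-1)$-homogeneous, or we are in one of the two exceptional configurations ($n=5,k=3$ with $G$ cyclic or dihedral, or $n=7,k=4$ with $G\cong\mathrm{AGL}(1,7)$), each of which has $t(G,k)=n$. In the exceptional cases the desired inequality is immediate, because $k<n$ yields $(n+k+1)/2\le n=t(G,k)$; so these pose no real difficulty and are dispatched in one line.

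The central step is the generic case where $G$ is $(k-1)$-homogeneous but not $k$-homogeneous. Here $G$ has at least two orbits on $k$-subsets, and $(k-1)$-homogeneity guarantees that each such orbit is a regular $G$-invariant $k$-uniform hypergraph: the number of edges of a fixed orbit lying above a given $(k-1)$-set is independent of the chosen $(k-1)$-set (by transitivity on $(k-1)$-subsets) and is at least $1$ since every orbit is nonempty. Fixing any $(k-1)$-subset $K$, the $k$-sets containing $K$ are exactly the $n-k+1$ sets $K\cup\{x\}$ with $x\notin K$, and these distribute among the $r\ge 2$ orbit-hypergraphs; thus the orbit valencies are positive integers summing to $n-k+1$. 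By averaging, the smallest orbit valency is at most $(n-k+1)/r\le (n-k+1)/2$, and since these orbit-hypergraphs are among those over which $d_k$ is minimised, we conclude $d_k\le (n-k+1)/2$, as required.

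Once Propositions~\ref{hyp} and~\ref{Living} are available, the argument is short and there is no serious technical obstacle. The one point that requires genuine care is the justification that each $G$-orbit on $k$-subsets really is a \emph{regular} hypergraph, which is precisely where the $(k-1)$-homogeneity hypothesis (supplied by Proposition~\ref{Living}) is needed; and the one place demanding a separate check is the short list of exceptional groups where $(k-1)$-homogeneity fails, for which the trivial bound $t(G,k)=n\ge (n+k+1)/2$ suffices.
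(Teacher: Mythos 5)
Your proof is correct and follows essentially the same route as the paper: the paper's own (very terse) argument likewise observes that a non-$k$-homogeneous group has at least two orbits on $k$-subsets, so the smallest valency of a regular $G$-invariant $k$-uniform hypergraph is at most $(n-(k-1))/2$, and then invokes Proposition~\ref{hyp}. Your additional justifications (the regularity of orbit hypergraphs via $(k-1)$-homogeneity and the separate check of the exceptional cases from Proposition~\ref{Living}) only make explicit what the paper leaves implicit or has already absorbed into the proof of Proposition~\ref{hyp}.
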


\begin{proof}
If $G$ is not $k$-homogeneous, then it has at least two orbits on $k$-subsets,
and the smallest regular $G$-invariant $k$-uniform hypergraph has valency at most $(n-(k-1))/2$. The proof now follows from Proposition~\ref{hyp}.
\end{proof}

\section{The cases $k\ge 3$: details}\label{case3b}
We have nearly complete classifications of finite groups satisfying the $k$-ut property for $k\ge 3$. We can hence go through this classification to determine the threshold value $t(G,k)$.

\subsection{Case $k=3$}
The groups satisfying $3$-ut were partially classified in \cite{ac}; in \cite{abc-et}, these results were extended and  some minor corrections added to obtain the following classification.

\begin{prop}\label{p:3ut}  Every group in the following list has the $3$-ut property.
\begin{enumerate}
\item\label{part1} $G$ is $3$-homogeneous;
\item\label{part2}$n=q+1$ and $\mathrm{PSL}(2,q)\le G \le \mathrm{P}\Sigma \mathrm{L}(2,q)$, where $q \equiv 1 \pmod 4$; 
\item\label{part3}$n=2^{2d-1}\pm 2^{d-1}$, $G=\mathrm{Sp}(2d,2)$, $d \ge 3$, in either of its $2$-transitive representations;
\item\label{part4}$n=2^{2d}$, $G=2^{2d}: \mathrm{Sp}(2d,2)$, $d \ge2$;
\item\label{part5} $(n,G)$ is one of $(5,\mathrm{C}_5)$, $(5,\mathrm{D}(2*5))$, $(7,\mathrm{AGL}(1,7))$, $(11,\mathrm{PSL}(2,11))$, $(16,2^4:A_6)$, $(64,2^6:G_2(2))$, $(64,2^6:G_2(2)')$, $(65,\mathrm{Sz}(8))$, $(65,\mathrm{Sz}(8):3)$, $(176,HS)$, $(276,\mathrm{Co}_3)$.
\end{enumerate}

If there is any other group with $3$-ut,  
then it is one in the following list:
\begin{enumerate}
\item[(1)] Suzuki groups $\mathrm{Sz}(q)$, potentially extended by field automorphisms ($n=q^2+1$);
\item[(2)] $\mathrm{AGL}(1,q)\le G\le \mathrm{A}\Gamma\mathrm{L}(1,q)$, where $q$ is either prime with $q \equiv 11 \mbox{ mod } 12$, or $q=2^p$ with $p$ prime, and  for all $c \in \mathrm{GF}(q)\setminus\{0,1\}$, 
$|\langle -1,c,c-1\rangle|=q-1$  $(n=q$);
\item[(3)]  subgroups of index $2$ in $\mathrm{AGL}(1,q)$, with $q \equiv 11 \mbox{ mod } 12$ and prime, and  for all $c \in \mathrm{GF}(q)\setminus\{0,1\}$, 
$|\langle -1,c,c-1\rangle|=q-1$  $(n=q$).
\end{enumerate}
\end{prop}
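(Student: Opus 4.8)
The plan is to treat the proposition as a classification with two directions: first, that each listed family genuinely has the $3$-ut property, and second, that no group outside the two displayed lists can have it. The positive direction I would verify family by family. For the $3$-homogeneous groups in (a) the property is immediate from Proposition~\ref{up}~\eqref{warda} together with~\eqref{wards}. For the remaining infinite families — $\mathrm{PSL}(2,q)\le G\le\mathrm{P}\Sigma\mathrm{L}(2,q)$, the two $2$-transitive representations of $\mathrm{Sp}(2d,2)$, and the affine groups $2^{2d}:\mathrm{Sp}(2d,2)$ — I would reduce $3$-ut to a concrete combinatorial condition on the orbits on $3$-subsets (described below) and check that it holds. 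The finitely many sporadic entries in (e) I would dispatch by direct computation in GAP.

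The necessity direction carries the real content, and I would organise it around the chain ``$3$-ut $\Rightarrow$ $2$-homogeneous $\Rightarrow$ known classification.'' The first implication is exactly Proposition~\ref{Living} applied with $k=3$: a $3$-ut group is either $2$-homogeneous or is the cyclic or dihedral group of degree $5$ (which accounts for the two degree-$5$ entries). I would then invoke the CFSG-based classification of $2$-homogeneous groups: by Kantor's theorem such a group is either $2$-transitive or lies in $\mathrm{A}\Gamma\mathrm{L}(1,q)$ with $q\equiv 3\pmod 4$, and the $2$-transitive groups split by the O'Nan--Scott dichotomy into affine groups (point stabiliser one of the groups on Hering's list) and almost simple groups (with socle alternating, projective, symplectic, unitary, Suzuki, Ree, or sporadic). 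Running through this list, the alternating, symmetric, and most of the multiply transitive almost simple groups are already $3$-homogeneous and so fall under (a); the remaining task is to decide $3$-ut for the merely $2$-homogeneous candidates.

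For each such candidate I would translate $3$-ut into a transversal-blocking condition: a $2$-homogeneous $G$ fails $3$-ut precisely when some $G$-orbit $\mathcal{O}$ on $3$-subsets admits a $3$-partition no member of which is a transversal, equivalently when the $3$-uniform hypergraph $(\Omega,\mathcal{O})$ can be blocked by a partition placing two points of every edge in a common part. For the line groups $\mathrm{PSL}(2,q)\le G\le\mathrm{P}\Sigma\mathrm{L}(2,q)$ the orbits on $3$-subsets are indexed by cross-ratios modulo the action of $S_3$ and the Galois action, and the blocking condition becomes the number-theoretic requirement that the relevant difference elements generate $\mathrm{GF}(q)^{\times}$; this is exactly the constraint $|\langle -1,c,c-1\rangle|=q-1$ appearing in the affine case and its $\mathrm{PSL}$ analogue. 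For the symplectic families the orbits on triples are controlled by the underlying quadratic form, and the blocking analysis reduces to a finite linear-algebra computation that I would carry out uniformly in $d$.

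The main obstacle is that two families resist a clean resolution, which is why the conclusion is stated as nearly complete and the second list is flagged as undecided. For the Suzuki groups $\mathrm{Sz}(q)$ of degree $q^2+1$ the suborbit structure is complicated enough that I do not expect a uniform criterion for which $q$ give $3$-ut, so these must be left open. For the one-dimensional affine groups the reduction succeeds, but it leaves the purely number-theoretic question of for which prime powers $q$ the condition $|\langle -1,c,c-1\rangle|=q-1$ holds for every $c\in\mathrm{GF}(q)\setminus\{0,1\}$; this appears genuinely open for large $q$, so the family can only be characterised by the condition itself rather than fully resolved. Apart from these, the remaining cases are finite or admit the explicit combinatorial and number-theoretic verification above, and assembling them yields precisely the two lists in the statement, matching the classifications of~\cite{ac} and~\cite{abc-et}.
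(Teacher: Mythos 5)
The paper does not prove this proposition at all: it is imported wholesale from \cite{ac} and \cite{abc-et} (``these results were extended and some minor corrections added to obtain the following classification''), supplemented only by the remark that the $3$-ut property was confirmed computationally for the undecided candidates with $n\le 50$. So the relevant comparison is between your outline and the proofs in those two references, and at that level your strategy is the right one and essentially theirs: reduce via the Livingstone--Wagner-type result (Proposition~\ref{Living}, which is independent of the present classification, so there is no circularity) to $2$-homogeneous groups plus the two degree-$5$ exceptions, invoke Kantor's classification of $2$-homogeneous groups and the CFSG list of $2$-transitive groups, and then decide $3$-ut case by case via the ``blocking partition'' reformulation.

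However, as written your proposal is a plan rather than a proof: every step that carries content is phrased as ``I would verify/reduce/carry out,'' and the case analysis that occupies a substantial part of \cite{ac} and \cite{abc-et} --- ruling out the unitary and Ree groups, the remaining affine groups with point stabiliser on Hering's list, and the sporadic $2$-transitive actions, and positively establishing $3$-ut for the symplectic families and for $\HS$ on $176$ points and $\Co_3$ on $276$ points --- is not executed. There is also a concrete inaccuracy: you assert that for $\psl(2,q)\le G\le\psigmal(2,q)$ the blocking condition ``is exactly'' the constraint $|\langle -1,c,c-1\rangle|=q-1$. That constraint governs the degree-$q$ one-dimensional affine candidates in the second (undecided) list, and a cognate condition appears in the $4$-ut analysis of the projective groups (Theorem~\ref{k=4}\eqref{case43}); but the $q\equiv 1\pmod 4$ projective entries in part~\eqref{part2} are settled affirmatively via the fact that the two orbits on $3$-subsets form regular two-graphs (as the paper itself exploits in Proposition~\ref{p:3ut+}), not via that generation condition. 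Finally, your phrase ``matching the classifications of \cite{ac} and \cite{abc-et}'' quietly outsources the whole necessity direction to the very sources you are meant to be reproving, so the proposal as it stands does not constitute an independent proof.
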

In the part~$(b)$ and $(c)$ above, we have denoted by $\mathrm{GF}(q)$ the finite field of cardinality $q$; moreover, for $c\in \mathrm{GF}(q)\setminus\{0,1\}$, we have denoted by $\langle -1,c,c-1\rangle$ the subgroup of the multiplicative group of $\mathrm{GF}(q)$ generated by $-1$, $c$ and $c-1$.
The $3$-ut property was confirmed by computation for all listed potential groups with $n\le 50$.

Recall that a \emph{regular two-graph}~\cite{taylor} on a set $\Omega$ is a set $T$ of $3$-subsets of $\Omega$ such that:
\begin{enumerate}\itemsep0pt
\item any $4$-subset contains an even number of members of $T$;
\item any two points lie in $\lambda$ members of $T$, for some fixed value $\lambda$.  
\end{enumerate}
It is not hard to verify that the groups in~\eqref{part2},~\eqref{part3},~\eqref{part4} and~
$(n,G)=(276,Co_3)$ of Proposition~\ref{p:3ut}  all have just two orbits on $3$-subsets and that  each orbit forms a regular two-graph, see for instance~\cite[Examples~$6.2$,~$6.3$ and~$6.4$ and Theorem~$6.7$]{taylor}. For these groups, we let $T$ and $T'$ denote these two orbits and we let $\lambda$ and $\lambda'$ denote the number of $3$-sets in $T$ and $T'$, respectively, that contain two given points (the valency as $3$-uniform hypergraphs). Moreover, we use the convention that $\lambda\le \lambda'$. 

\begin{prop}\label{p:3ut+}
Let $G$ be one of the groups in~$\eqref{part2}$,~$\eqref{part3}$,~$\eqref{part4}$  or~
$(n,G)=(276,Co_3)$ of Proposition~$\ref{p:3ut}$ and let $\lambda$ and $\lambda'$ be as above. Then $\lambda'+3\le t(G,3)$. Moreover, if $l>\lambda'+2$ and $l>\min\{3\lambda/2, (6\lambda'+9)/5\}$, then $G$ has the $(3,l)$-ut property.
\end{prop}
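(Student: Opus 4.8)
The plan splits into the lower bound $\lambda'+3\le t(G,3)$ and the sufficiency of the stated conditions. For the lower bound I would argue directly from Proposition~\ref{hyp}. Since $G$ has exactly two orbits $T,T'$ on $3$-subsets, the only $G$-invariant $3$-uniform hypergraphs are $\emptyset$, $T$, $T'$ and $T\cup T'$; among the regular ones the positive valencies are $\lambda$, $\lambda'$ and (for the complete hypergraph $T\cup T'$) $n-2$. As every pair of points lies in exactly $n-2$ triples in total, we have $\lambda+\lambda'=n-2$, so the smallest valency is $d_3=\lambda$. Proposition~\ref{hyp} then gives $t(G,3)\ge n-d_3+1=n-\lambda+1=\lambda'+3$.

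For sufficiency I would first reduce $(3,l)$-ut to a statement purely about the two-graph: since the $3$-subset $A$ ranges over both orbits and $G$ is transitive on each orbit, $G$ has $(3,l)$-ut if and only if, for every $l$-subset $B$ and every $3$-partition $P=\{B_1,B_2,B_3\}$ of $B$, the set of transversals of $P$ meets both $T$ and $T'$. Assume this fails for some $B$ and $P$; after possibly interchanging the orbits (equivalently, replacing the two-graph by its complement, which swaps $T\leftrightarrow T'$ and $\lambda\leftrightarrow\lambda'$) we may suppose that every transversal of $P$ lies in $T'$. I would then invoke the switching-class description~\cite{taylor}: choose a graph $\Gamma$ in the switching class for which $T$ is exactly the set of triples spanning an odd number of edges, and switch $\Gamma$ on $B$ so that every pair meeting two distinct parts becomes a non-edge. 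After this normalisation, a triple contained in $B$ lies in $T$ precisely when it spans an odd number of internal (within-part) edges, and every transversal automatically spans none, consistent with the assumption.

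The heart of the argument is to extract numerical inequalities from regularity. Counting the $\lambda$ triples of $T$ through a cross-pair $\{x,y\}$ with $x\in B_i$, $y\in B_j$, and separating third points inside $B$ (which contribute the internal degrees $\deg_{B_i}(x)$ and $\deg_{B_j}(y)$) from the at most $n-l$ points outside $B$, gives $l-\lambda'-2\le \deg_{B_i}(x)+\deg_{B_j}(y)\le \lambda$ (using $n=\lambda+\lambda'+2$); likewise the presence of an internal edge in $B_i$ forces $b_i\ge l-\lambda$. The fully degenerate case, in which $B$ spans no internal edge at all, makes every triple of $B$ lie in $T'$ and hence forces $l\le \lambda'+2$, which is excluded by the first hypothesis. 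From the surviving configurations I would derive the two bounds separately: an elementary count organised by how many parts carry an internal edge yields $l\le 3\lambda/2$, whereas a global double count of the $T$-triples inside $B$, sharpened by the defining spectral identity $S^2=(\lambda'-\lambda)S+(n-1)I$ for the Seidel matrix $S$ of a regular two-graph (equivalently, Cauchy interlacing of the quotient matrix of $S$ with respect to $B_1,B_2,B_3$), yields $l\le (6\lambda'+9)/5$. Since both inequalities must hold whenever $(3,l)$-ut fails and $l>\lambda'+2$, we obtain $l\le\min\{3\lambda/2,(6\lambda'+9)/5\}$, which is the contrapositive of the claim.

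The main obstacle is the second, global, estimate and its uniform validity. The elementary degree inequalities are not by themselves strong enough to exclude unbalanced configurations (for example one part edgeless while the other two carry all the edges), and it is exactly here that the two-eigenvalue property of a regular two-graph must be used to control the distribution of internal edges and close the gap to $(6\lambda'+9)/5$. Handling parts of size $1$, where the degree bounds degenerate, and tracking the asymmetric roles of $\lambda$ and $\lambda'$ in the two directions (missing a $T$-transversal versus missing a $T'$-transversal) are the points that demand the most care.
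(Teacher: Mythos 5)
Your lower bound is fine: identifying $d_3=\lambda$ as the smallest valency of a regular $G$-invariant $3$-uniform hypergraph and invoking Proposition~\ref{hyp} is equivalent to the paper's direct construction (take $\{x\},\{y\}$ and the set $Z$ of points completing $\{x,y\}$ to a triple of $T'$). The sufficiency direction, however, is not a proof but a plan with the two decisive steps missing. First, the bound $l\le 3\lambda/2$ does not follow from ``an elementary count organised by how many parts carry an internal edge''; the paper's argument hinges on a specific observation you do not make: after defining a graph $\Gamma$ on $B$ in which cross-part pairs are all edges or all non-edges (this is where the two-graph parity condition enters), the induced subgraph on the smallest part $X$ cannot be complete when $l\ge\lambda'+3$, and a non-adjacent pair $x,x'\in X$ then lies in a $T$-triple with \emph{every} point of $Y\cup Z$, forcing $\lambda\ge |Y\cup Z|\ge 2l/3$. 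Note this argument needs $|X|\ge 2$; the singleton-part case, which you acknowledge but do not resolve, is handled in the paper by a separate argument via Watkins' theorem (the orbital graphs of $G_x$ on $\Omega\setminus\{x\}$ have vertex-connectivity equal to their valency $\lambda$, so the induced subgraph on $Y\cup Z$ is connected once $l\ge n-\lambda+1=\lambda'+3$). Second, and more seriously, you explicitly flag the bound $l\le(6\lambda'+9)/5$ as the ``main obstacle'' and propose an unexecuted spectral/interlacing route via the Seidel matrix. The paper closes this gap with elementary counting entirely inside the part $X$: for each $\bar x\in X$ at least $(|X|-1)(|X|-3)/4$ pairs of $X$ are adjacent to both or to neither of its elements, so some pair $\{x,x'\}$ admits at least $(|X|-3)/2$ such points; combined with the parity characterisation of $T'$ (a triple lies in $T'$ iff it spans $1$ or $3$ edges of $\Gamma$) this pair lies in at least $(|X|-3)/2+|Y|+|Z|$ triples of $T'$, whence $(l/3-3)/2+2l/3\le\lambda'$. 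No spectral input is needed.

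Two further cautions. Your proposed WLOG interchange of $T$ and $T'$ is illegitimate here: the target inequalities are asymmetric in $\lambda\le\lambda'$, and swapping the orbits swaps $\lambda$ and $\lambda'$, so you would end up proving $l\le 3\lambda'/2$ and $l\le(6\lambda+9)/5$ in one of the two cases, which is not the claim. The paper avoids this by treating both failure modes (all transversals in $T$, or all in $T'$) with the same graph $\Gamma$, using only that cross-part pairs are uniformly edges or uniformly non-edges. Finally, the reduction that makes the whole analysis possible --- that for $x_1,x_2$ in one part, all triples $\{x_1,x_2,w\}$ with $w$ in the other two parts lie in a single $G$-orbit --- is a nontrivial fact imported from the $3$-ut analysis in~\cite{ac}, and your switching-class normalisation would still need an argument of this kind to control within-part pairs.
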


\begin{proof}
Let $T$ and $T'$ be the two $G$-orbits on $3$-subsets with parameters $\lambda \le \lambda'$, respectively.
 
Pick distinct points $x$ and $y$ and let $Z:=\{z\mid \{x,y,z\} \in T'\}$. By definition, $|Z|=\lambda'$. The partial $3$-partition $\{\{x\},\{y\}, Z\}$ witnesses that $G$ does not satisfy 
$(3,\lambda'+2)$-ut  and hence $t(G,3)\ge \lambda'+3$. 

Now consider an arbitrary partial partition $\{X,Y,Z\}$ with $|X| \le |Y|\le |Z|$.
We first treat the case $|X|=1$, that is, $X=\{x\}$ for some $x$. Consider the orbital graph of $G_x$ on $\Omega \setminus \{x\}$, such that $\{y,z\}$ is an edge if and only if $\{x,y,z\} \in T$. This graph is vertex-primitive, because in each of the groups under consideration $G_x$ acts primitively on $\Omega\setminus\{x\}$.  Hence its connectivity equals its valency 
$\lambda$ by~\cite{watkins}. If $1+|Y|+|Z|> n-\lambda$, the induced subgraph on $Y \cup Z$ is connected, and hence $\{X,Y,Z\}$ has a section from $T$.  By the same argument, it also contains a section of $T'$. It follows that if $G$ fails $(3,l)$-ut  with $l\ge n - \lambda +1=(\lambda+\lambda'+2)-\lambda+1=\lambda' +3$, this failure is not witnessed by a partition involving a singleton. 

Now consider the case $|X| \ge 2$, and assume that the partial partition $\{X,Y,Z\}$ witnesses the failure of $(3,l)$-ut for $G$, with $l \ge \lambda' +3$. Hence all sections of $\{X,Y,Z\}$ lie in either $T$ or $T'$.
Moreover, as in the proof for the $3$-ut property for the same groups in~\cite{ac}, it follows that for fixed $x_1,x_2 \in X$, all $3$-subsets of the form $\{x_1,x_2,w\}$ with $w \in Y \cup Z$  lie in the same $G$-orbit, and the corresponding result holds for any permutation of $X,Y,Z$. 

Define a graph $\Gamma$ on $X\cup Y \cup Z$ as follows. For distinct elements $v$ and $w$, pick an element $u$ in a part containing neither $v$ nor $w$, and let $v$ and $w$ be adjacent in $\Gamma$ if and
only if $\{u,v,w\} \in T'$. It follows from the previous paragraph that this definition does not depend on the choice of $u$. 

Note that pairs $\{u,v\}$ not contained in a part are either all edges or all non-edges of $\Gamma$. From this, it is easy to see that any $3$-subset of $X\cup Y\cup Z$
is contained in $T'$ if and only if it contains $1$ or $3$ edges.  

 If the subgraph induced by $\Gamma$ on $X$ is complete, then for  distinct $x,x' \in X$, $\{x,x',w\} \in T'$ for all 
$w \in (X \cup Y\cup Z)\setminus\{x,x'\}$. However, this contradicts $|X\cup Y\cup Z|=l \ge \lambda'+3$. Therefore, the subgraph induced by $\Gamma$ on $X$ is not complete and hence there exist two distinct points $x,x' \in X$ non-adjacent in $\Gamma$. Then $\{x,x',w\} \in T$ for  all $w \in Y \cup Z$, and 
hence $\lambda\ge |Y\cup Z|=l-|X|\ge l-l/3=2l/3$. Thus
$$l\le \frac{3\lambda}{2}.$$

Let $a=|X|$, let  $w \in Z$, let $\bar x \in X$ and let  $v_{\bar{x}}$ be the number of neighbours of $\bar{x}$ in $X$. Then $\bar{x}$ has $a-1-v_{\bar{x}}$ non-neighbours in $X$. In particular, there are $$\frac{(a-1)(a-2)}{2}-v_{\bar{x}}(a-v_{\bar{x}}-1)$$ pairs  $\{x,x'\}$  in $X$ such that either both or none of $x,x'$ is adjacent to $\bar x$. This number is minimal when $v_{\bar{x}}=(a-1)/2$ with minimum 
$$\frac{(a-1)(a-2)}{2} - \left(\frac{a-1}{2}\right)^2=\frac{(a-1)(a-3)}{4}.$$ As $\bar x$ was arbitrary, we deduce that there are at least $a(a-1)(a-3)/4$  pair-point combinations in $X$ with this property. Hence, for some  pair
$\{x,x'\}$ in $X$, there are at least  $(a-3)/2$ corresponding points. Suppose that $\{x,x'\}$ is an edge (a non-edge will result in an improved bound). Then 
$\{x,x',w\} \in T'$ for at least $(a-3)/2$ values $w \in X$, in addition to all $w\in Y \cup Z$. Hence
$$(a-3)/2 +|Y|+|Z| \le \lambda'.$$
We want to maximise $l=a+|Y|+|Z|$ with regard to the above constraint. This is achieved if $a$ is maximal, and thus for $a =  l/3=|Y|=|Z| $.
Hence  $(l/3-3)/2+l/3+l/3 \le \lambda'$ and so 
$ l\le (6\lambda'+9)/5.$
\end{proof}

\begin{proof}[Proof of Theorem~$\ref{k=3}$]
We need to consider only the groups arising from Proposition~\ref{p:3ut} part~\eqref{part2},~\eqref{part3},~\eqref{part4} and~\eqref{part5}, because the groups in~\eqref{part1} have $t(G,3)=3$ and any other group having the $3$-ut property has been reported in Table~$\ref{table:1}$ with no information on $t(G,3)$.

When $G$ is as in~\eqref{part2}, the proof follows from Proposition~\ref{p:3ut+} observing that $\lambda=\lambda'=(q-1)/2$, see~\cite{taylor}.
When $G$ is as in~\eqref{part3}, the proof follows from Proposition~\ref{p:3ut+} observing that $n = 2^{2d-1} \pm 2^{d-1}$ and $\{\lambda,\lambda'\}=\{2^{2d-2},2^{2d-2}\pm 2^{d-1}-2\}$, see~\cite{taylor} (the case $\Sp(6,2)$ with $n=28$ is the only one where $l>3\lambda/2$ is stronger than $l>(6\lambda'+9)/5$). When $G$ is as in~\eqref{part4}, the proof follows from Proposition~\ref{p:3ut+} observing that $n = 2^{2d}$, $\lambda=2^{2d-1}-2$ and $\lambda'=2^{2d-1}$, see~\cite{taylor}. Finally suppose that $G$ is as in~\eqref{part5}. If $(n,G)=(276,Co_3)$, then $\lambda=112$ and $\lambda'=162$ by~\cite{taylor} and hence the bounds in Table~\ref{table:1} follow from Proposition~\ref{p:3ut+}. The exact value for $t(G,3)$, when $(n,G)\in \{(7,\mathrm{AGL}(1,7)), (11,\mathrm{PSL}(2,11)),
(2^4:A_6)\}$, follows with a computation. The bound for $t(G,3)$, when $(n,G)\in \{(64,\mathrm{G}_2(2)), (64,\mathrm{G}_2(2)'), (65,\mathrm{Sz}(8)),(65,\mathrm{Aut}(\mathrm{Sz}(8))),(176,HS)\}$, follows again with the help of a computer: we have determined the smallest valency of a regular $G$-invariant $3$-uniform hypergraph for $G$  and we have applied Proposition~\ref{hyp}.
\end{proof}

\subsection{Case $k\ge 4$}
\begin{proof}[Proof of Theorem~$\ref{upperhalf}$]
This follows immediately from the remark preceding Theorem \ref{Living}.
\end{proof}
\begin{proof}[Proof of Theorem~$\ref{k>5}$]
When $k\le \lfloor (n+1)/2\rfloor$, the result follows from~\cite[Theorem~$1.4$]{ac}. When $\lfloor (n+1)/2 \rfloor < k \le n-6$, the result follows from~\cite[Theorem~$3.3$]{ac}.
\end{proof}

\begin{proof}[Proof of Theorem~$\ref{k=5}$]
From~\cite[Theorem~$1.5$]{ac}, we see that either $G$ is $5$-homogeneous (that is, $t(G,5)=5$), or $n=33$ and $G=\mathrm{P}\Gamma\mathrm{L}(2,32)$. In the second case, the determination of $t(G,5)$ follows with a computation.
\end{proof}
\begin{proof}[Proof of Theorem~$\ref{k=4}$]
From~\cite{abc-et}, we see that either $G$ is $4$-homogeneous (that is, $t(G,4)=4$), or $(n,G)$ is one of the four cases listed in the statement of Theorem~$\ref{k=4}$. For the first three cases, the value of $t(G,4)$ follows with a computation.
\end{proof}

\section{Regular semigroups}\label{semigroups}
 
 Arguably, three of the most important classes of semigroups are {\em groups}, {\em inverse} semigroups and {\em regular} semigroups, defined as follows. For a semigroup $S$, we have that
\begin{itemize}
\item  $S$ is a group if, for all $a\in S$, there exists a unique $b\in S$ such that $a=aba$;
\item  $S$ is  inverse if, for all $a\in S$, there exists a unique $b\in S$ such that $a=aba$ and $b=bab$;
\item  $S$ is regular if, for all $a\in S$, there exists $b\in S$ such that $a=aba$.  
\end{itemize}
Note that, if $a=aba$, then by setting $b':=bab$ we have $a=ab'a$ and $b'=b'ab'$.

To a large extent semigroup structure  theory  is about trying to show how the idempotents shape the structure of the semigroup, see for instance~\cite{ac}. Therefore it is no surprise that groups and inverse semigroups can be  characterized by their idempotents:
\begin{itemize}
\item a semigroup is inverse if and only it is regular and the idempotents commute (see \cite[Theorem 5.5.1]{Ho95});  
\item a semigroup is a group if and only if it is regular and contains exactly one idempotent (see \cite[Ex. 3.11]{Ho95}). 
 \end{itemize}   
 
 Inverse semigroups, apart from being the class of  semigroups with the largest number of books dedicated to them, were introduced by geometers and are still very important in the area \cite{Pa99}. 
 
Let $\Omega$ be a finite set.   The semigroup of partial transformations or functions on $\Omega$, denoted $\ptrans(\Omega)$, is regular, and all regular semigroups embed in some $\ptrans(\Omega)$; every group embeds in some $\ptrans(\Omega)$ as a group of permutations; and every inverse semigroup embeds in some $\ptrans(\Omega)$ as a semigroup of partial $1$-$1$  transformations. (This follows from the Vagner--Preston representation \cite[Theorem 5.1.7]{Ho95} that maps every inverse semigroup into an isomorphic semigroup of partial bijections on a set, and the analogous theorem of Cayley for groups.)
 
 The following result is stated for partial transformations but the proof is essentially in \cite{lmm};  we provide it just for the sake of completeness. For $t\in \ptrans( \Omega)$, let $\dom (t)$ denote the domain of $t$, $\im(t):=\Omega t$ and $\rank(t):=|\Omega t|$.    
 
   \begin{lemma}{{\rm (\cite[Lemma 2.3]{lmm})}}\label{mainlemma}
   Let $u\in \ptrans (\Omega)$ and let $G\le \ptrans (\Omega)$ be a group of permutations. Then the following are equivalent:
\begin{enumerate}
\item $u$ is regular in $\langle G,u\rangle$;
\item   there exists $g\in G$ such that $\rank( ugu)=\rank( u)$. 
\end{enumerate}
  \end{lemma}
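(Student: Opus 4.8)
The plan is to prove the equivalence in Lemma~\ref{mainlemma} by a direct argument relating regularity of $u$ in $\langle G,u\rangle$ to the existence of a group element $g$ that preserves the rank of $u$ after sandwiching. The key structural fact I would exploit is that for a partial transformation $v$, we have $\rank(v)\le \rank(u)$ whenever $v=uwu$ for some $w$, since the image of $uwu$ is contained in $\Omega u w u\subseteq (\Omega u)wu$, and composing can only collapse points, never create new ones; thus $\rank(uwu)\le\rank(u)$ always, with equality being the meaningful condition. This monotonicity is the engine behind both implications.

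First I would prove $(b)\Rightarrow(a)$, the easier direction. Assume there exists $g\in G$ with $\rank(ugu)=\rank(u)$. The idea is to show that $g$ (or a slight modification built from $g$ and $G$) furnishes a semigroup element $b$ witnessing $u=ubu$. Since $\rank(ugu)=\rank(u)$, the map $u$ restricted appropriately and the map $ugu$ have the same image size, so $ug$ acts as a bijection between $\im(u)$ and $\im(ugu)$; because $g$ is invertible, one can track how $ugu$ relates to $u$ on the level of the $\ker$--$\im$ decomposition. Concretely, I would argue that the condition $\rank(ugu)=\rank(u)$ forces $ugu$ to have the same kernel (partition of the domain induced by point preimages) and a bijectively related image as $u$, so that there is a group element (namely a power or inverse-type adjustment of $g$, which lives in $G\subseteq\langle G,u\rangle$) making $u\cdot(gu\cdots)\cdot u=u$. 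The cleanest route is to set $b=gu(\text{something})$ inside $\langle G,u\rangle$ and verify $ubu=u$ by checking it agrees with $u$ on $\dom(u)$ using that equal rank plus the containment $\im(ugu)\subseteq\im(u)$ forces equality of images and a matching of kernels.

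Next, for $(a)\Rightarrow(b)$, assume $u$ is regular in $\langle G,u\rangle$, so there exists $b\in\langle G,u\rangle$ with $u=ubu$. Every element of $\langle G,u\rangle$ is an alternating product of elements of $G$ and copies of $u$; the plan is to show that in the equation $u=ubu$ one may, without loss of generality, reduce $b$ to a single group element sandwiched suitably. Taking ranks in $u=ubu$ and using the monotonicity fact repeatedly, each occurrence of $u$ inside $b$ cannot decrease rank below $\rank(u)$ on the relevant image, which pins down that the first nontrivial group element $g$ appearing in $b$ must already satisfy $\rank(ugu)=\rank(u)$; otherwise the rank would strictly drop and could never recover to equal $\rank(u)$ on the left-hand side. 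This extraction of a single good $g$ from an arbitrary word $b$ is the main obstacle: one must handle words of arbitrary length in $G$ and $u$ and argue that rank-preservation at the outermost sandwich is inherited. I expect to resolve this by an induction on word length combined with the inequality $\rank(u v u)\le\min\{\rank(u),\rank(v)\}$ refined to $\rank(uwu)=\rank(u)$ only if the interior word $w$ maps $\im(u)$ injectively onto a set meeting every kernel class of $u$ exactly once, from which the existence of a single group element with the same effect follows because $G$ acts transitively enough on the relevant configuration within a single word. Since this is precisely the content of \cite[Lemma~2.3]{lmm}, I would at this point invoke that the detailed verification parallels the full-transformation case, the only adjustment being to track domains carefully for partial maps.
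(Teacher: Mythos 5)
Your direction (a)$\Rightarrow$(b) is essentially the paper's argument and is fine in outline, though you make it sound harder than it is: no induction on word length and no appeal to transitivity of $G$ is needed. Writing $b=g_1u g_2\cdots u g_m$ (with the $g_i$ possibly trivial), one simply observes
$$\rank(u)=\rank(ubu)=\rank(ug_1u\cdots g_{m-1}ug_mu)\le\rank(ug_1u)\le\rank(u),$$
so the first group letter $g_1$ already works. Your extra condition about the interior word mapping $\im(u)$ ``injectively onto a set meeting every kernel class exactly once'' is the right picture but is not needed to extract $g_1$; the chain of inequalities does everything.

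The genuine gap is in (b)$\Rightarrow$(a). You correctly note that $\rank(ugu)=\rank(u)$ forces $\im(ug)\subseteq\dom(u)$ to be a transversal of $\ker(u)$, so that $ugu$ has the same kernel and image as $u$; but that only says $ugu=u\pi$ for some bijection $\pi$ of $\im(u)$, and it does \emph{not} by itself produce $b$ with $ubu=u$. Your proposed witness --- ``a power or inverse-type adjustment of $g$, which lives in $G$'' --- is wrong as stated: the required element is in general \emph{not} a group element, and $ug^mu=u$ need not hold for any $m$. The missing step is the idempotent-power argument: since $\im(ug)$ is a transversal of $\ker(u)$ contained in $\dom(u)$, the element $ug$ permutes the kernel classes, so $\rank((ug)^m)=\rank(u)$ for all $m$ and, by finiteness of $\Omega$, some power $(ug)^{\omega}$ is idempotent; one then checks $A_i(ug)^{\omega}=\{a_i\}$ with $a_i\in A_i$, hence $(ug)^{\omega}u=u$ and $b=g(ug)^{\omega-1}\in\langle G,u\rangle$ satisfies $ubu=u$. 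Without this (or an equivalent inversion of $\pi$ inside the semigroup), your ``verify $ubu=u$ by checking it agrees with $u$ on $\dom(u)$'' is circular, and falling back on ``invoke \cite[Lemma~2.3]{lmm}'' is citing the statement you are asked to prove.
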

\begin{proof}
Suppose $u$ is regular in  $\langle G,u\rangle$. Then, there exists $b\in  \langle G,u\rangle$ such that $u=ubu$. As $b=g_1u\cdots g_{m-1}ug_m$ for some $g_1,\ldots,g_m\in G$, it follows that 
$$\rank(u)=\rank(ubu)=\rank(ug_1u\cdots g_{m-1}ug_mu)\le \rank(ug_1u)\le \rank(u),$$
and hence $\rank(ug_1u)=\rank(u)$. 

Conversely, suppose $g\in G$ and $\rank(ugu)=\rank(u)$. Let $r:=\rank(u)$ and let $\{A_1,\ldots,A_r\}$ be the kernel of $u$.
Since $\rank(ugu)=\rank(u)$, $\im(ug)$ is a transversal for the kernel of $u$ and hence 
\begin{equation*}\rank((ug)^m)=\rank(u),
\end{equation*} for all natural numbers $m\ge 1$. Since $\Omega$ is a finite set, there exists a positive integer $\omega$ such that $(ug)^\omega$ is idempotent. As $\rank(u)=\rank((ug)^\omega)$, we deduce that $\{A_1,\ldots,A_r\}$ is also the kernel of $(ug)^\omega$. In particular, for each $i\in \{1,\ldots,r\}$, there exists $a_i\in A_i$ with $A_i(ug)^\omega=\{a_i\}$. 
Then $$A_i(ug)^\omega u=\{a_i\}u\subseteq A_iu,$$
for every $i\in \{1,\ldots,r\}$. This shows that $(ug)^\omega u=u$. If $\omega=1$, then $u=ugu$ and hence $u$ regular in $\langle G,u\rangle$. Similarly, if $\omega>1$, then $u=(ug)^\omega u=ug(ug)^{\omega-1}u=u\cdot g(ug)^{\omega-1}\cdot u$ and hence $u$ is regular in $\langle G,u\rangle$.
 \end{proof}

\begin{proof}[Proof of Theorem~$\ref{thrm:main}$]
It is clear that \ref{b)}) implies \ref{a)}). 

\smallskip

We now show that \ref{a)}) implies \ref{c)}). Let $A$ be a $k$-subset of $\Omega$, let $B$ an $l$-subset of $\Omega$ and let $P$ be a $k$-partition of $B$. Let $t\in \ptrans(\Omega)$ with $\dom(t)=B$ and with kernel  $P$. In particular, $\rank(t)=k$ and $|\dom(t)|=l$ and hence $t\in \ptrans_{k,l}(\Omega)$. As $\langle G,t\rangle$ is regular,  $t$ is regular in $\langle G,t\rangle$ and hence by Lemma~\ref{mainlemma} there exists $g\in G$ with $\rank(tgt)=\rank(t)$. Therefore $g$ maps $\im(t)=A$ into a transversal of the kernel $P$ of $t$. 

\smallskip

We finally prove  that \ref{c)}) implies \ref{b)}), when $k\le \max\{(n+1)/2,n-6\}$. Let $G$ possess the $(k,l)$-ut property, let $t\in \ptrans_{k,m}(\Omega)$ with $l\le m$ and let $v\in S:=\langle G,t\rangle$.  We let $r:=\rank(v)$, $B:=\dom(v)$, $A:=\im(v)$ and $P$ be the kernel of $v$. Observe that $r\le k$, and if $k=r$ then $|B| \ge l$. Suppose first that $G$ posseses the $(r,|B|)$-property. Thus, we deduce that there exists $g\in G$ such that $Ag=\im(vg)$ is a section of $P$ and hence $\rank(vgv)=\rank(v) $. Therefore, by Lemma~\ref{mainlemma}, we have that $v$ is regular in $\langle G,v\rangle$ and hence regular in $S$.
 
Suppose that $G$ does not possess the $(r,|B|)$-ut property. In particular, $G$ is not $r$-homogeneous by Proposition~\ref{up}~\eqref{warda} and~\eqref{wards}. Moreover, by Proposition~\ref{up}~\eqref{wards}, we have $1\le r<k$, as $k=r$ implies $|B| \ge l$. From Proposition~\ref{Living}   and from the remark preceding Proposition~\ref{Living}, we see that we have one of the following cases to deal with:
\begin{itemize}
\item $n=5$, $k=3$, $t(G,k)=5$ and $G$ is cyclic or dihedral,
\item $n=7$, $k=4$, $t(G,k)=7$ and $G$ is isomorphic to $\mathrm{AGL}(1,7)$.
\end{itemize}
Thus $l=m=n$, because $n\ge m\ge l\ge t(G,k)$. In particular, the domain of $t$ is the whole of $\Omega$. As $v\in S=\langle G,t\rangle$, we deduce that the domain of $v$ is also the whole of $\Omega$ and hence $|B|=n$. However, another direct inspection on these cases shows that $G$ posseses the $(r,n)$-ut property, contradicting the fact that we were assuming $G$ does not have the $(r,|B|)$-ut property. 
\end{proof}

  \section{Problems}

In the light of Theorems~\ref{k=4} and~\ref{k=3}, we propose the following problem.
\begin{problem} Improve upon the values of $t(G,3)$ in Table~\ref{table:1} and determine whether case~\eqref{case43} in Theorem~\ref{k=4} arises in general.
\end{problem}

A semigroup is said to be {\em idempotent generated} if it is generated by its own idempotents. 
\begin{problem}
Let $\Omega$ be a set of size $n$ and let $1\le k\le l < n$. Classify the permutation groups $G$ acting on $\Omega$ such that for all $t\in \ptrans_{k,l}(\Omega)$, the semigroup $\langle G,t\rangle\setminus G$ is idempotent generated.
\end{problem}

 A degree $n$ permutation group on $\Omega$ is said to have the $k$-et property if there exists some $k$-set $K\subseteq \Omega$ such that the orbit of $K$ under $G$ contains transversals for every $k$-partition of $\Omega$   \cite{abc-et}. The $k$-et property generalizes the $k$-ut property. Let  $k$ and $l$ be integers with $1\le k\le l\le n$. Then $G$ is said to have the \emph{$(k,l)$-existential transversal property}
(or $(k,l)$-et for short) if there exists a $k$-subset $A$ such that the orbit of $A$ contains a transversal for every $k$-partition of any $l$-subset $B$ of
$\Omega$.

\begin{problem}
Prove, for groups possessing the $(k,l)$-et property, results analogous to the ones in this paper on groups possessing the $(k,l)$-ut property. 
\end{problem} 

If the previous problem can be solved, then it will be possible to provide an analogue of Theorem~$\ref{thrm:main}$ for transformations with prescribed image (rather than just prescribed rank), a much stronger result.  

Let $V$ be a vector space of finite dimension $n$. For $1\le k\le l\le n$, let $\End_{k,l}(V)$ be the set of linear transformations $t:L\to K$, with $Lt=K$, where $L$ and $K$ are subspaces of $V$ of dimension $l$ and $k$, respectively.   

\begin{problem}
Classify the linear groups $G\le \aut (V)$ such that for all $t\in  \End_{k,l}(V)$ there exists $g\in G$ such that $\rank(tgt)=\rank(t)$. 
\end{problem}

The previous problem is certainly very difficult since easier versions of it (see \cite[Problems 10 and 11]{ac}) are still open.

\end{document}